\theoremstyle{plain}
\newtheorem{theorem}{Theorem}
\newtheorem{corollary}[theorem]{Corollary}
\newtheorem{lemma}[theorem]{Lemma}
\theoremstyle{remark}
\newtheorem*{remark}{Remark}
\newtheorem*{example}{Example}
\numberwithin{equation}{section}
\numberwithin{theorem}{section}
\begin{document}
\title[Minimal clones with few majority operations]{Minimal clones with few majority operations}
\author[T. Waldhauser]{Tam\'{a}s Waldhauser}
\address{Bolyai Institute\\
University of Szeged\\
Aradi v\'{e}rtan\'{u}k tere 1, H--6720, Szeged, Hungary}
\email{twaldha@math.u-szeged.hu}
\urladdr{http://www.math.u-szeged.hu/\symbol{126}twaldha}
\thanks{{Research supported by the Hungarian National Foundation for Scientific
Research grant no. T48809 and K60148.}}
\keywords{{clone, minimal clone, majority operation}}
\subjclass[2000]{08A40}

\begin{abstract}
We characterize minimal clones generated by a majority function containing at
most seven ternary operations.

\end{abstract}
\dedicatory{Dedicated to B\'{e}la Cs\'{a}k\'{a}ny on his seventy-fifth birthday}
\maketitle

\section{Introduction\label{sect intro}}

A set $\mathcal{C}$ of finitary operations on a set $A$ is a \emph{(concrete)
clone}, if it is closed under composition of functions and contains all
projections. If $\mathbb{A}=\left(  A;F\right)  $ is an algebra, then the set
of its term functions, denoted by $\operatorname{Clo}\mathbb{A}$, is a clone
on $A$, called the \emph{clone of the algebra} $\mathbb{A}$. This is the
smallest clone containing $F$, therefore we say that $F$ \emph{generates}
$\operatorname{Clo}\mathbb{A}$, and we write $\left[  F\right]
=\operatorname{Clo}\mathbb{A}$. Clearly, every clone arises as the clone of an
algebra: we just need to pick a generating set for the clone, and let these be
the basic operations of the algebra.

An \emph{(abstract) clone} is a heterogeneous algebra that captures the
compositional structure of concrete clones \cite{BL,Taylor}. More precisely,
an abstract clone $\mathcal{C}$ is given by a family $\mathcal{C}^{\left(
n\right)  }\left(  n\geq1\right)  $ of sets with distinguished elements
$e_{i}^{\left(  n\right)  }\in\mathcal{C}^{\left(  n\right)  }\left(  1\leq
i\leq n\right)  $ and mappings
\[
F_{k}^{n}:\mathcal{C}^{\left(  n\right)  }\times\left(  \mathcal{C}^{\left(
k\right)  }\right)  ^{n}\rightarrow\mathcal{C}^{\left(  k\right)  },\,\left(
f,g_{1},\ldots,g_{n}\right)  \mapsto f\left(  g_{1},\ldots,g_{n}\right)
\quad\left(  n,k\geq1\right)  ,
\]
such that the following three axioms are satisfied for all $f\in
\mathcal{C}^{\left(  n\right)  },\,g_{1},\ldots,g_{n}\in\mathcal{C}^{\left(
k\right)  },$\linebreak$h_{1},\ldots,h_{k}\in\mathcal{C}^{\left(  l\right)
}\left(  n,k,l\geq1\right)  $:%
\begin{align*}
e_{i}^{\left(  n\right)  }\left(  g_{1},\ldots,g_{n}\right)   &  =g_{i}%
\quad\left(  i=1,\ldots,n\right)  ,\\
f\left(  e_{1}^{\left(  n\right)  },\ldots,e_{n}^{\left(  n\right)  }\right)
&  =f,\\
f\left(  g_{1},\ldots,g_{n}\right)  \left(  h_{1},\ldots,h_{k}\right)   &
=f\left(  g_{1}\left(  h_{1},\ldots,h_{k}\right)  ,\ldots,g_{n}\left(
h_{1},\ldots,h_{k}\right)  \right)  .
\end{align*}
The notion of a subclone, clone homomorphism and factor clone can be defined
in a natural way, and the isomorphism theorems can be proved for abstract clones.

Every concrete clone can be regarded as an abstract clone if we let
$e_{i}^{\left(  n\right)  }$ be the $i$-th $n$-ary projection, and $F_{k}%
^{n}\left(  f,g_{1},\ldots,g_{n}\right)  $ be the composition of $f$ by
$g_{1},\ldots,g_{n}$, as we have already indicated it in the notation. We will
call the elements $e_{i}^{\left(  n\right)  }$ \emph{projections}, the
mappings $F_{k}^{n}$ \emph{composition} operations, and $\mathcal{C}^{\left(
n\right)  }$ the $n$-ary part of $\mathcal{C}$, even if the elements of the
abstract clone are not functions. However, every abstract clone is isomorphic
to a concrete clone, so we can always assume that the elements of the clone
are actually functions.

There is a close relationship between abstract clones and varieties; roughly
speaking, abstract clones are the same as varieties up to term-equivalence
\cite{KK,LLPPP}. To explain this more explicitly, let us fix an abstract clone
$\mathcal{C}$, and a generating set $F$ of $\mathcal{C}$. For any clone
homomorphism $\varphi$ from $\mathcal{C}$ to a concrete clone on some set $A$
we can construct the algebra $\mathbb{A}=\left(  A;\varphi\left(  F\right)
\right)  $ whose clone is $\varphi\left(  \mathcal{C}\right)  $. It is not
hard to see that the algebras arising this way form a variety. If we choose
another set of generators, then we get another variety which is
term-equivalent to the previous one. Conversely, a clone can be assigned to
every variety, namely the clone of the countably generated free algebra of the
variety, and these two correspondences between clones and varieties are
inverses of each other (up to isomorphism of clones and term-equivalence of varieties).

If $\mathcal{C}$ and $\mathcal{V}$ correspond to each other, then subvarieties
of $\mathcal{V}$ correspond to factor clones of $\mathcal{C}$, and the
congruence lattice of $\mathcal{C}$ is dually isomorphic to the subvariety
lattice of $\mathcal{V}$. If $\mathcal{V}$ is generated by $\mathbb{A}$, then
$\mathcal{C}\cong\operatorname{Clo}\mathbb{A}$, and an algebra $\mathbb{B}$
(of the appropriate type) belongs to $\mathcal{V}$ iff $\operatorname{Clo}%
\mathbb{B}$ is a homomorphic image of $\operatorname{Clo}\mathbb{A}$. An
important special case is when $\mathbb{B}$ is a subalgebra of $\mathbb{A}$.
In this case the restriction map $\operatorname{Clo}\mathbb{A}\rightarrow
\operatorname{Clo}\mathbb{B},\,f\mapsto f|_{B}$ is a surjective clone homomorphism.

All clones on a given set $A$ form a lattice with respect to inclusion; the
smallest element of this lattice is the \emph{trivial clone}, the clone of all
projections on $A$, while the greatest element is the clone of all finitary
operations on $A$. These clones will be denoted by $\mathcal{I}_{A}$ and
$\mathcal{O}_{A}$ respectively. The elements of the trivial clone (the
projections) will be referred to as \emph{trivial functions}. An abstract
clone $\mathcal{C}$ is called trivial if $\mathcal{C}^{\left(  n\right)
}=\left\{  e_{1}^{\left(  n\right)  },\ldots,e_{n}^{\left(  n\right)
}\right\}  $ for all $n\geq1$.

We say that $\mathcal{C}$ is a \emph{minimal clone}, if it has exactly two
subclones: the trivial clone and $\mathcal{C}$ itself. In the case of concrete
clones on a given set $A$, we can identify minimal clones as the atoms of the
clone lattice. On finite sets there are finitely many minimal clones, and
every clone contains a minimal one (cf. \cite{PK,Qsurv,SzAclUA}).

Clearly, a nontrivial clone is minimal iff it is generated by any of its
nontrivial elements. Therefore all minimal clones are one-generated, thus they
arise as clones of algebras with a single basic operation. It is convenient to
choose a function of the least possible arity as a generator of a minimal
clone. These generators are called \emph{minimal functions}: $f$ is a minimal
function iff $\left[  f\right]  $ is a minimal clone and there is no
nontrivial function in $\left[  f\right]  $ whose arity is less than the arity
of $f$. According to \'{S}wierczkowski's lemma \cite{Sw}, a minimal function
must be either a unary operation, a binary idempotent operation, a ternary
majority or minority operation or a semiprojection. Rosenberg's theorem
\cite{R5typ,SzAclUA} characterizes minimal unary and ternary minority
operations, but for the other three types a general description of minimal
functions (or clones) seems to be far beyond reach.

There are numerous partial results that describe minimal clones or minimal
functions under certain assumptions, and the goal of this paper is to prove a
new theorem of this kind. In the next section we recall only a few facts about
minimal clones that we will need in the sequel; for a survey of minimal clones
see \cite{Csminicourse} and \cite{Qsurv}. In Section~\ref{sect symm maj} we
prove a theorem about the possible symmetries of majority functions in a
minimal clone (Theorem~\ref{thm symm}), and in Section~\ref{sect atmostfour}
we use this theorem to obtain a characterization of those minimal clones
generated by a majority operation which contain at most seven ternary
operations (Theorem~\ref{thm few}).

\section{Preliminaries\label{sect prel}}

For brevity we will say that $\mathcal{C}$ is a \emph{majority clone} if
$\mathcal{C=}\left[  f\right]  $ where $f$ is a \emph{majority operation},
i.e. $f$ satisfies the identities%
\[
f\left(  x,x,y\right)  =f\left(  x,y,x\right)  =f\left(  y,x,x\right)  =x.
\]

First we state and prove a very special property of majority clones. This fact
seems to be folklore; usually it is derived from Rosenberg's theorem or from
\'{S}wierczkowski's lemma. Here we give an almost self-contained proof.

\begin{theorem}
[\cite{Cscons}]\label{thm maj min iff 3min}Let $\mathcal{C}$ be a clone
generated by a majority operation $f$. If every majority operation in
$\mathcal{C}$ generates $f$, then $\mathcal{C}$ is a minimal clone.
\end{theorem}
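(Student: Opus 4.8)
The plan is to reduce the statement to understanding the minimal \emph{sub}clones of $\mathcal{C}$. Assume $\mathcal{C}$ is not minimal; then it has a nontrivial proper subclone, hence (on a finite underlying set every nontrivial clone contains a minimal one) a minimal subclone $\mathcal{D}\subsetneq\mathcal{C}$. I will show that $\mathcal{D}$ must itself be a majority clone, i.e.\ must contain a majority operation $m$. Once this is done, $m\in\mathcal{C}$, so by hypothesis $[m]=\mathcal{C}$; but $[m]\subseteq\mathcal{D}$, forcing $\mathcal{D}=\mathcal{C}$, a contradiction. Hence every nontrivial subclone of $\mathcal{C}$ equals $\mathcal{C}$, i.e.\ $\mathcal{C}$ is minimal.

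So the task is: a minimal subclone $\mathcal{D}$ of a majority clone $\mathcal{C}=[f]$ is a majority clone. First note that $f$, being a majority operation, is idempotent, and idempotency is inherited under composition; hence every operation in $\mathcal{C}$, in particular in $\mathcal{D}$, is idempotent. By \'{S}wierczkowski's lemma \cite{Sw}, $\mathcal{D}$ is generated by a minimal function $p$ which is unary, binary idempotent, a ternary majority, a ternary minority, or a semiprojection. An idempotent unary operation is the identity, so the unary case is impossible. If $p$ is a ternary majority, we are done. It remains to exclude the binary, the minority, and the semiprojection cases.

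For the binary case I observe that $\mathcal{C}$ contains \emph{no} binary operation other than the two projections: by induction on the complexity of terms, any binary term built from a majority operation $f$ collapses to a variable, since whenever $f$ is applied to three expressions in at most two variables, two of the three arguments coincide and the majority identity applies. Hence no minimal subclone of $\mathcal{C}$ can be generated by a binary idempotent operation. For the minority case, fix a two-element subset $B\subseteq A$ and consider the restriction homomorphism $\mathcal{C}\rightarrow\mathcal{C}|_{B}=[\,f|_{B}\,]$. Here $f|_{B}$ is the two-element majority operation, and an easy check shows that the ternary part of the clone it generates consists exactly of the three projections and the ternary majority. If $p$ were a minority operation, then $p|_{B}$ would satisfy the minority identities $p(x,x,y)=p(x,y,x)=p(y,x,x)=y$ on $B$, which none of these four ternary operations does; contradiction.

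The semiprojection case is the main obstacle, because restriction to two-element subsets gives nothing: a semiprojection is automatically a projection on every two-element subset. If $p$ is an $n$-ary semiprojection ($n\geq3$), pick an $n$-element subset $C\subseteq A$ on which $p$ is genuinely non-trivial (such a $C$ exists because $p\neq\pi_{i}^{(n)}$) and pass to $p|_{C}\in\mathcal{C}|_{C}=[\,f|_{C}\,]$, which is then a majority clone on the finite set $C$ whose subclone $[\,p|_{C}\,]$ is generated by a non-trivial $n$-ary semiprojection. One is thus reduced to showing that no minimal subclone of a majority clone on a finite set of size $n$ is generated by an $n$-ary semiprojection. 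I expect to attack this by analysing the binary (and ternary) invariant relations forced by the majority operation $f$ and checking that a semiprojection compatible with all of them must be a projection; the fall-back is to invoke the known classification of minimal clones generated by semiprojections and observe that it is incompatible with lying inside a majority clone — which is precisely why the argument is only \emph{almost} self-contained. This is the step I expect to require the most care.
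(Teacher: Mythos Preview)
Your approach via a case analysis of minimal subclones is different from the paper's, and it has a genuine gap: the semiprojection case is not resolved. You acknowledge this yourself, and the sketched reduction does not work as stated --- restricting $\mathcal{C}$ to an $n$-element subset $C$ is only a clone homomorphism when $f$ preserves $C$, which need not hold for $|C|\geq 3$ (it holds automatically only for two-element subsets, by the majority identities). The proposed fall-back to a ``known classification of minimal clones generated by semiprojections'' is not a proof; no such usable classification exists. Your overall strategy also tacitly assumes the underlying set is finite (so that a proper nontrivial subclone contains a minimal one), while the theorem as stated carries no such hypothesis.

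The paper avoids all of this with one observation: every nontrivial operation in a clone generated by a majority function is a \emph{near-unanimity} operation (a short induction on terms). Given any nontrivial $g\in\mathcal{C}$ of arity $n\geq 4$, one then checks that identifying a suitable pair of variables yields a nontrivial $(n-1)$-ary operation, which is again near-unanimity; iterating produces a majority operation inside $[g]$, and the hypothesis finishes. This is shorter, needs no finiteness and no \'{S}wierczkowski case split, and --- incidentally --- the near-unanimity fact would have closed your semiprojection case in one line: an $n$-ary semiprojection onto coordinate $i$ sends the tuple $(x,\ldots,x,y,x,\ldots,x)$ (with $y$ in position $i$, $n\geq 3$) to $y$, whereas near-unanimity forces the value $x$.
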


\begin{proof}
The key is the following observation, which can be proved by a simple
induction argument \cite{Cscons}. If $g$ is a nontrivial operation in a clone
generated by a majority function, then $g$ is a so-called \emph{near-unanimity
operation}, i.e. it satisfies the identities
\[
g(y,x,x,\ldots,x,x)=g(x,y,x,\ldots,x,x)=\cdots=g(x,x,x,\ldots,x,y)=x.
\]

We show that any near-unanimity function $g$ of arity $n\geq4$ produces a
nontrivial function of arity $n-1$ by a suitable identification of its
variables. Let us suppose that $g\left(  x,x,x_{3},\ldots,x_{n}\right)  $ is a
projection. Identifying all the $x_{i}$s except for $x_{n}$ with $x$, we get a
projection onto $x$ by the near-unanimity property, therefore $g\left(
x,x,x_{3},\ldots,x_{n}\right)  $ cannot be a projection onto $x_{n}$. This can
be done for any $x_{i}$ instead of $x_{n}$, thus $g\left(  x,x,x_{3}%
,\ldots,x_{n}\right)  $ has to be a projection onto $x$. If we suppose that
$g\left(  x_{1},x_{2},y,y,x_{5},\ldots,x_{n}\right)  $ is also a projection,
then a similar argument shows that it must be a projection onto $y$. Now we
have a contradiction, because $g\left(  x,x,y,y,x_{5},\ldots,x_{n}\right)  $
is a projection to $x$ and $y$ at the same time (this is where we use that
$n\geq4$). Thus we have proved that either $g\left(  x,x,x_{3},\ldots
,x_{n}\right)  $ or $g\left(  x_{1},x_{2},y,y,x_{5},\ldots,x_{n}\right)  $ is nontrivial.

Now if $g$ is an at least quaternary near-unanimity function in the clone
$\mathcal{C}$, then it produces a nontrivial function of arity one less, which
is again a near-unanimity function, since it is still generated by $f$. Hence
if this new function is still of arity at least $4$, then it produces a
near-unanimity function of lesser arity too, and we can continue this way
until we end up with a near-unanimity function of arity $3$, i.e. a majority
operation. Since it was supposed that every majority operation in
$\mathcal{C}$ generates $f$, we have $f\in\left[  g\right]  $, hence
$\mathcal{C}=\left[  g\right]  $, and this shows that $\mathcal{C}$ is a
minimal clone.
\end{proof}

The advantage of this property is that in order to prove the minimality of a
majority clone it suffices to consider the ternary part of the clone. On a
finite set this means a finite number of functions, while in the binary and
semiprojection case one has to consider infinitely many functions.

Restricting our attention to the ternary operations of an abstract clone we
get the algebra $\left(  \mathcal{C}^{\left(  3\right)  };F_{3}^{3}%
,e_{1}^{\left(  3\right)  },e_{2}^{\left(  3\right)  },e_{3}^{\left(
3\right)  }\right)  $. We will refer to this algebra as the ternary part of
$\mathcal{C}$, and denote it briefly by $\mathcal{C}^{\left(  3\right)  }$.
This is an algebra with one quaternary and three nullary operations satisfying
the following identities.%
\begin{align*}
F_{3}^{3}\left(  e_{i}^{\left(  3\right)  },f_{1},f_{2},f_{3}\right)   &
=f_{i}\quad\left(  i=1,2,3\right) \\
F_{3}^{3}\left(  f,e_{1}^{\left(  3\right)  },e_{2}^{\left(  3\right)  }%
,e_{3}^{\left(  3\right)  }\right)   &  =f\\
F_{3}^{3}\left(  F_{3}^{3}\left(  f,g_{1},g_{2},g_{3}\right)  ,h_{1}%
,h_{2},h_{3}\right)   &  =\\
F_{3}^{3}(f,F_{3}^{3}  &  \left(  g_{1},h_{1},h_{2},h_{3}\right)  ,F_{3}%
^{3}\left(  g_{2},h_{1},h_{2},h_{3}\right)  ,F_{3}^{3}\left(  g_{3}%
,h_{1},h_{2},h_{3}\right)  )
\end{align*}
Now Theorem~\ref{thm maj min iff 3min} can be formulated in the following way:
A majority clone $\mathcal{C}$ is minimal iff $\left\{  e_{1}^{\left(
3\right)  },e_{2}^{\left(  3\right)  },e_{3}^{\left(  3\right)  }\right\}  $
is the only proper subalgebra of $\mathcal{C}^{\left(  3\right)  }$.

As opposed to the case of binary operations and semiprojections, there are not
many examples of minimal majority functions. The simplest ones are the median
function $\left(  x\wedge y\right)  \vee\left(  y\wedge z\right)  \vee\left(
z\wedge x\right)  $ on any lattice \cite{PK}, and the dual discriminator
function on any set \cite{CsG,FP}. The description of minimal clones on the
three-element set given by B.~Cs\'{a}k\'{a}ny \cite{Cs3all} yields some more examples.

\begin{theorem}
[\cite{Cs3all}]\label{thm Cs3 maj}If $f$ is a minimal majority function on a
three-element set, then $f$ is isomorphic to one of the twelve majority
functions shown in Table~\textup{\ref{table 3maj}}. These functions belong to
three minimal clones containing $1,3$ and $8$ majority operations
respectively, as shown in the table.
\end{theorem}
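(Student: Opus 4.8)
This theorem is part of B.~Cs\'{a}k\'{a}ny's classification of all minimal clones on a three-element set \cite{Cs3all}: given that classification one simply keeps the clones that happen to be generated by a majority operation and records, for each, its ternary majority operations. I sketch instead a more self-contained verification, which is feasible precisely because of Theorem~\ref{thm maj min iff 3min}.

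First reduce to a finite problem. A majority operation on $A=\{0,1,2\}$ is determined by its six values on the pairwise-distinct triples, so there are $3^{6}=729$ of them, and two are isomorphic iff conjugate by some $\pi\in S_{3}$, i.e. related by the substitution $f\mapsto\pi^{-1}f\bigl(\pi x,\pi y,\pi z\bigr)$; a Burnside count (each of the three transpositions fixes $27$ operations, each of the two $3$-cycles fixes $9$) leaves $\tfrac{1}{6}\bigl(729+3\cdot27+2\cdot9\bigr)=138$ isomorphism types. For each representative $f$, Theorem~\ref{thm maj min iff 3min} --- together with its converse, which is immediate, since in a minimal clone every nontrivial element, in particular every majority operation, is a generator --- reduces minimality of $[f]$ to a finite check inside the ternary part: $[f]$ is minimal iff $\bigl\{e_{1}^{(3)},e_{2}^{(3)},e_{3}^{(3)}\bigr\}$ is the only proper subalgebra of $\mathcal{C}^{(3)}$. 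Concretely, one closes $\{e_{1}^{(3)},e_{2}^{(3)},e_{3}^{(3)},f\}$ under the operation $F_{3}^{3}$ to obtain the (finite) ternary part $[f]^{(3)}$, and then checks, for every non-projection $g\in[f]^{(3)}$, that the closure of $\{e_{1}^{(3)},e_{2}^{(3)},e_{3}^{(3)},g\}$ under $F_{3}^{3}$ is again all of $[f]^{(3)}$; the operations passing this test are precisely the minimal majority functions.

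Carrying this out, every minimal majority operation turns out to generate a clone isomorphic to one of just three, whose ternary majority parts have cardinalities $1$, $3$ and $8$ --- in particular these three clones are pairwise non-isomorphic --- and writing out one representative of each and listing all its majority operations yields the twelve entries of Table~\ref{table 3maj}. Two of the three clones are familiar: the one with $8$ majority operations is generated by the median of the chain $0<1<2$ and the one with a single majority operation by the dual discriminator (both mentioned in Section~\ref{sect prel}), which makes those two cases transparent without appeal to the full enumeration. The main obstacle is simply the bulk of the case analysis, $138$ operations each requiring one to build up a subclone's ternary part; but Theorem~\ref{thm maj min iff 3min} is exactly what keeps every case a bounded finite computation rather than an infinite one, and the only genuinely delicate bookkeeping is choosing honest orbit representatives for the $S_{3}$-action at the start and, at the end, identifying which of the three isomorphism types each surviving clone belongs to.
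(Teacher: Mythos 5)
The paper does not prove this statement; it is quoted from Cs\'{a}k\'{a}ny's classification \cite{Cs3all}, so there is no proof of record to compare against. Your plan --- reduce to the $138$ conjugacy classes of majority operations on a three-element set via Burnside's lemma (your fixed-point counts of $27$ per transposition and $9$ per $3$-cycle are correct), and for each representative decide minimality by closing the ternary part under $F_{3}^{3}$ and testing that every non-projection regenerates it --- is sound; it is exactly the reformulation of Theorem~\ref{thm maj min iff 3min} stated in Section~\ref{sect prel}, and the computation terminates because the ternary part of a majority clone on a three-element set has at most $3^{6}+3$ elements. The actual content of the theorem (that exactly three clones survive, with $1$, $3$ and $8$ majority operations) is of course only asserted as the outcome of that computation, but for a cited result this is an acceptable level of detail.

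However, your closing identification of the ``familiar'' clones is wrong, and in a way that undercuts the claimed shortcut. According to Section~\ref{sect prel}, the median of a three-element chain is (up to isomorphism) $m_{1}$ and generates the clone with a \emph{single} majority operation, while the dual discriminator lies in $\left[ m_{2}\right]$, the clone with \emph{three} majority operations; you have attributed the median to the clone with $8$ majority operations and the dual discriminator to the clone with $1$. In particular, the clone $\left[ m_{3}\right]$ with $8$ majority operations is generated by neither familiar example, so that case is not ``transparent without appeal to the full enumeration'' --- it genuinely requires the case analysis (or the explicit table). This is a localized error rather than a flaw in the method, but as written the sketch would lead you to misidentify two of the three clones and to believe you had independently confirmed a case that you had not.
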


\begin{table}[h]
\centering%
\begin{tabular}
[c]{ccccccccccccc}\hline
\multicolumn{1}{|c}{} & \multicolumn{1}{|c}{$m_{1}$} &
\multicolumn{1}{|c}{$m_{2}$} &  &  & \multicolumn{1}{|c}{$m_{3}$} &  &  &  &
&  &  & \multicolumn{1}{c|}{}\\\hline
\multicolumn{1}{|c}{$(1,2,3)$} & \multicolumn{1}{|c}{$1$} &
\multicolumn{1}{|c}{$1$} & $2$ & $3$ & \multicolumn{1}{|c}{$3$} & $3$ & $1$ &
$3$ & $1$ & $1$ & $3$ & \multicolumn{1}{c|}{$1$}\\
\multicolumn{1}{|c}{$(2,3,1)$} & \multicolumn{1}{|c}{$1$} &
\multicolumn{1}{|c}{$2$} & $3$ & $1$ & \multicolumn{1}{|c}{$3$} & $1$ & $3$ &
$3$ & $1$ & $3$ & $1$ & \multicolumn{1}{c|}{$1$}\\
\multicolumn{1}{|c}{$(3,1,2)$} & \multicolumn{1}{|c}{$1$} &
\multicolumn{1}{|c}{$3$} & $1$ & $2$ & \multicolumn{1}{|c}{$3$} & $3$ & $3$ &
$1$ & $1$ & $1$ & $1$ & \multicolumn{1}{c|}{$3$}\\\hline
\multicolumn{1}{|c}{$(2,1,3)$} & \multicolumn{1}{|c}{$1$} &
\multicolumn{1}{|c}{$2$} & $1$ & $3$ & \multicolumn{1}{|c}{$1$} & $3$ & $1$ &
$1$ & $3$ & $1$ & $3$ & \multicolumn{1}{c|}{$3$}\\
\multicolumn{1}{|c}{$(1,3,2)$} & \multicolumn{1}{|c}{$1$} &
\multicolumn{1}{|c}{$1$} & $3$ & $2$ & \multicolumn{1}{|c}{$1$} & $1$ & $1$ &
$3$ & $3$ & $3$ & $3$ & \multicolumn{1}{c|}{$1$}\\
\multicolumn{1}{|c}{$(3,2,1)$} & \multicolumn{1}{|c}{$1$} &
\multicolumn{1}{|c}{$3$} & $2$ & $1$ & \multicolumn{1}{|c}{$1$} & $1$ & $3$ &
$1$ & $3$ & $3$ & $1$ & \multicolumn{1}{c|}{$3$}\\\hline
&  &  &  &  &  &  &  &  &  &  &  &
\end{tabular}
\caption{Minimal majority functions on the three-element set}%
\label{table 3maj}%
\end{table}

Note that we have omitted those triples in the table where the majority rule
determines the value of the functions. Let us also observe that $m_{1}$ can be
defined as the median function of the three-element chain (with the unusual
order $2<1<3$ or $3<1<2$), and $m_{2}$ is nothing else but the dual
discriminator, up to a permutation of variables (the third function in
$\left[  m_{2}\right]  $ is actually the dual discriminator).

Based on this theorem, B.~Cs\'{a}k\'{a}ny obtained a characterization of
minimal majority operations which are \emph{conservative} \cite{Cscons}. A
function is \emph{conservative} if it preserves every subset of the underlying
set (cf. \cite{Qcat}). It is clear that if $f$ is a conservative minimal
majority function on a set $A$, and $B\subseteq A$ is a three-element subset,
then $f|_{B}$ is a minimal majority function on $B$. Thus $f|_{B}$ is
isomorphic to one of the above twelve functions. These restrictions determine
$f$, so we can say that $f$ is somehow glued together from copies of the
functions listed in Table~\ref{table 3maj}.

We do not quote the result here, but let us note that from this description it
follows that there are only four conservative minimal majority clones up to
isomorphism of the ternary part of the clone (but not up to isomorphism of the
whole clone; see the example in Section~\ref{sect symm maj}). For three of
them the ternary part is isomorphic to $\left[  m_{1}\right]  ^{\left(
3\right)  },\left[  m_{2}\right]  ^{\left(  3\right)  },\left[  m_{3}\right]
^{\left(  3\right)  }$ respectively, hence they contain $1,3$ and $8$ majority
operations, while the fourth one contains $24$ majority operations.

As the next theorem shows, the nonconservative minimal majority functions on
the four-element set are quite similar to those on the three-element set.

\begin{theorem}
[\cite{W4maj}]\label{thm 3.14}If $f$ is a minimal majority function on a
four-element set, then $f$ is either conservative, or isomorphic to one of the
twelve majority functions shown in Table~\textup{\ref{table 4maj}}. These
functions belong to three minimal clones containing $1,3$ and $8$ majority
operations respectively, as shown in the table. \textup{(}The middle two rows
mean that if $\left\{  a,b,c\right\}  $ equals $\left\{  1,2,4\right\}  $ or
$\left\{  1,3,4\right\}  $, then the value of the functions on $\left(
a,b,c\right)  $ is $4$.\textup{) }Moreover, the clone generated by $M_{i}$ is
isomorphic to $\left[  m_{i}\right]  $.
\end{theorem}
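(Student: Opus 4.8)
The plan is to reduce the four-element problem to the three-element classification (Theorem~\ref{thm Cs3 maj}) by restricting $f$ to the three-element subalgebras of $(A;f)$, and then to pin down $f$ by a case analysis in which Theorem~\ref{thm maj min iff 3min} serves as the tool that eliminates the non-minimal candidates.

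I would first record two easy facts about a minimal majority function $f$ on $A=\{1,2,3,4\}$. Every majority operation preserves all subsets of size at most two, since a ternary tuple over a two-element set always has a repeated coordinate and its value is therefore dictated by the majority identities; hence if $f$ is not conservative, some \emph{three}-element subset fails to be a subalgebra of $(A;f)$, and after relabelling we may assume this subset is $\{1,2,3\}$ with $f(1,2,3)=4$. Secondly, if $B$ is a three-element subalgebra of $(A;f)$, then $f|_B$ is again a minimal majority function on $B$, hence isomorphic to one of the twelve functions of Table~\ref{table 3maj}. To see this, recall that the restriction map $\operatorname{Clo}(A;f)\to\operatorname{Clo}(B;f|_B)$ is a surjective clone homomorphism; any majority operation of $[f|_B]$ is therefore the restriction of some ternary non-projection $g\in[f]$, which by the folklore fact used in the proof of Theorem~\ref{thm maj min iff 3min} is itself a majority operation; since $[f]$ is minimal, $g$ generates $f$, so $g|_B$ generates $f|_B$, and Theorem~\ref{thm maj min iff 3min} then yields the minimality of $[f|_B]$.

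The combinatorial core is to determine the $24$ values $f(a,b,c)$ with $a,b,c$ pairwise distinct, all other values being fixed by the majority identities. Starting from $f(1,2,3)=4$, I would first decide which of the four three-element subsets are subalgebras: on a subalgebra the restriction is one of the very rigid functions of Table~\ref{table 3maj}, while on a non-subalgebra the values are forced to be directed outward in a coherent way. The mechanism that turns these observations into a complete classification is the reformulation of Theorem~\ref{thm maj min iff 3min}: $\{e_1^{(3)},e_2^{(3)},e_3^{(3)}\}$ must be the only proper subalgebra of $\operatorname{Clo}(A;f)^{(3)}$, equivalently, every majority operation in $[f]$ must generate $f$. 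One forms the finitely many ternary terms built from $f$ and the projections, each of which is a majority operation or a projection, and discards every candidate assignment of the distinct-entry values for which some derived majority operation $g$ fails to generate $f$. One then checks that exactly the twelve assignments listed in Table~\ref{table 4maj} survive; for these $M_i$ one confirms minimality directly via Theorem~\ref{thm maj min iff 3min}, and one verifies, by comparing the explicit descriptions, that the three resulting clones are isomorphic to $[m_1],[m_2],[m_3]$ respectively.

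The main obstacle is precisely this case analysis. Restriction to subalgebras cuts the number of possibilities down dramatically, but the tuples lying in three-element subsets that are \emph{not} subalgebras are only loosely constrained, so the bookkeeping --- enumerating the candidate value patterns up to the symmetries that fix the data already chosen, and then killing the non-minimal ones by producing a majority term that does not generate $f$ --- is where the real work lies, and where the four-element case, though closely parallel to the three-element one, genuinely differs. A minor additional point requiring care is to keep track of which equivalence is in force: isomorphism of the operation $f$ (conjugation by a permutation of $A$) versus isomorphism of the abstract clone $[f]$, as reflected in the final clause of the theorem.
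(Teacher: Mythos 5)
First, note that the paper you are working from does not prove Theorem~\ref{thm 3.14} at all: it is quoted from \cite{W4maj}, so there is no internal argument to compare yours against. Judged on its own terms, your reduction step is correct and cleanly argued: majority operations preserve all subsets of size at most two, so non-conservativity forces a three-element subset that is not closed, and after conjugation one may take $f(1,2,3)=4$; and your argument that the restriction of $f$ to a three-element subalgebra is again a \emph{minimal} majority function (via the surjective restriction homomorphism, the near-unanimity fact, and Theorem~\ref{thm maj min iff 3min}) is exactly the observation the paper itself records in Section~\ref{sect prel} for the conservative case. The overall plan --- constrain the $24$ values on injective triples using Theorem~\ref{thm Cs3 maj} on the closed three-element subsets, and eliminate the remaining candidates by exhibiting a majority term that does not regenerate $f$ --- is the natural route and is in the spirit of the computations the present paper carries out in Lemma~\ref{thm maj=4}.

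As a proof, however, the proposal is incomplete in two concrete respects. First, the entire content of the theorem lives in the case analysis that you explicitly defer (``where the real work lies''): nothing in the sketch actually shows that exactly twelve non-conservative functions survive, nor that they fall into three clones with $1$, $3$ and $8$ majority operations; ``enumerate and check'' is the theorem, not a proof of it. Second, and more seriously as a matter of logic, the final clause $[M_i]\cong[m_i]$ cannot be obtained ``by comparing the explicit descriptions.'' Two clones of algebras are isomorphic iff the algebras generate term-equivalent varieties, and the paper's own example in Section~\ref{sect symm maj} produces infinitely many pairwise non-isomorphic minimal clones all of whose ternary parts are isomorphic to $[m_1]^{(3)}$. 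So agreement of the ternary parts --- which is all your case analysis can deliver --- does not determine the clone up to isomorphism; to prove $[M_i]\cong[m_i]$ one must compare full equational theories, e.g.\ by exhibiting each of $(\{1,2,3\};m_i)$ and $(\{1,2,3,4\};M_i)$ inside the variety generated by the other. That step is genuinely missing from your sketch, and you should not present the ``Moreover'' clause as a routine afterthought.
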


\begin{table}[h]
\centering%
\begin{tabular}
[c]{ccccccccccccc}\hline
\multicolumn{1}{|c}{} & \multicolumn{1}{|c}{$M_{1}$} &
\multicolumn{1}{|c}{$M_{2}$} &  &  & \multicolumn{1}{|c}{$M_{3}$} &  &  &  &
&  &  & \multicolumn{1}{c|}{}\\\hline
\multicolumn{1}{|c}{$(1,2,3)$} & \multicolumn{1}{|c}{$4$} &
\multicolumn{1}{|c}{$4$} & $2$ & $3$ & \multicolumn{1}{|c}{$3$} & $3$ & $4$ &
$3$ & $4$ & $4$ & $3$ & \multicolumn{1}{c|}{$4$}\\
\multicolumn{1}{|c}{$(2,3,1)$} & \multicolumn{1}{|c}{$4$} &
\multicolumn{1}{|c}{$2$} & $3$ & $4$ & \multicolumn{1}{|c}{$3$} & $4$ & $3$ &
$3$ & $4$ & $3$ & $4$ & \multicolumn{1}{c|}{$4$}\\
\multicolumn{1}{|c}{$(3,1,2)$} & \multicolumn{1}{|c}{$4$} &
\multicolumn{1}{|c}{$3$} & $4$ & $2$ & \multicolumn{1}{|c}{$3$} & $3$ & $3$ &
$4$ & $4$ & $4$ & $4$ & \multicolumn{1}{c|}{$3$}\\\hline
\multicolumn{1}{|c}{$(2,1,3)$} & \multicolumn{1}{|c}{$4$} &
\multicolumn{1}{|c}{$2$} & $4$ & $3$ & \multicolumn{1}{|c}{$4$} & $3$ & $4$ &
$4$ & $3$ & $4$ & $3$ & \multicolumn{1}{c|}{$3$}\\
\multicolumn{1}{|c}{$(1,3,2)$} & \multicolumn{1}{|c}{$4$} &
\multicolumn{1}{|c}{$4$} & $3$ & $2$ & \multicolumn{1}{|c}{$4$} & $4$ & $4$ &
$3$ & $3$ & $3$ & $3$ & \multicolumn{1}{c|}{$4$}\\
\multicolumn{1}{|c}{$(3,2,1)$} & \multicolumn{1}{|c}{$4$} &
\multicolumn{1}{|c}{$3$} & $2$ & $4$ & \multicolumn{1}{|c}{$4$} & $4$ & $3$ &
$4$ & $3$ & $3$ & $4$ & \multicolumn{1}{c|}{$3$}\\\hline
\multicolumn{1}{|c}{$\{1,2,4\}$} & \multicolumn{1}{|c}{$4$} &
\multicolumn{1}{|c}{$4$} & $4$ & $4$ & \multicolumn{1}{|c}{$4$} & $4$ & $4$ &
$4$ & $4$ & $4$ & $4$ & \multicolumn{1}{c|}{$4$}\\\hline
\multicolumn{1}{|c}{$\{1,3,4\}$} & \multicolumn{1}{|c}{$4$} &
\multicolumn{1}{|c}{$4$} & $4$ & $4$ & \multicolumn{1}{|c}{$4$} & $4$ & $4$ &
$4$ & $4$ & $4$ & $4$ & \multicolumn{1}{c|}{$4$}\\\hline
\multicolumn{1}{|c}{$(4,2,3)$} & \multicolumn{1}{|c}{$4$} &
\multicolumn{1}{|c}{$4$} & $2$ & $3$ & \multicolumn{1}{|c}{$3$} & $3$ & $4$ &
$3$ & $4$ & $4$ & $3$ & \multicolumn{1}{c|}{$4$}\\
\multicolumn{1}{|c}{$(2,3,4)$} & \multicolumn{1}{|c}{$4$} &
\multicolumn{1}{|c}{$2$} & $3$ & $4$ & \multicolumn{1}{|c}{$3$} & $4$ & $3$ &
$3$ & $4$ & $3$ & $4$ & \multicolumn{1}{c|}{$4$}\\
\multicolumn{1}{|c}{$(3,4,2)$} & \multicolumn{1}{|c}{$4$} &
\multicolumn{1}{|c}{$3$} & $4$ & $2$ & \multicolumn{1}{|c}{$3$} & $3$ & $3$ &
$4$ & $4$ & $4$ & $4$ & \multicolumn{1}{c|}{$3$}\\\hline
\multicolumn{1}{|c}{$(2,4,3)$} & \multicolumn{1}{|c}{$4$} &
\multicolumn{1}{|c}{$2$} & $4$ & $3$ & \multicolumn{1}{|c}{$4$} & $3$ & $4$ &
$4$ & $3$ & $4$ & $3$ & \multicolumn{1}{c|}{$3$}\\
\multicolumn{1}{|c}{$(4,3,2)$} & \multicolumn{1}{|c}{$4$} &
\multicolumn{1}{|c}{$4$} & $3$ & $2$ & \multicolumn{1}{|c}{$4$} & $4$ & $4$ &
$3$ & $3$ & $3$ & $3$ & \multicolumn{1}{c|}{$4$}\\
\multicolumn{1}{|c}{$(3,2,4)$} & \multicolumn{1}{|c}{$4$} &
\multicolumn{1}{|c}{$3$} & $2$ & $4$ & \multicolumn{1}{|c}{$4$} & $4$ & $3$ &
$4$ & $3$ & $3$ & $4$ & \multicolumn{1}{c|}{$3$}\\\hline
&  &  &  &  &  &  &  &  &  &  &  &
\end{tabular}
\caption{Nonconservative minimal majority functions on the four-element set}%
\label{table 4maj}%
\end{table}

In the above examples, and actually for all known examples of minimal majority
clones, the ternary part of the clone is isomorphic to the ternary part of a
conservative clone. Thus we have only four examples for minimal majority
clones up to isomorphism of the ternary part of the clones, so it is natural
to ask if there are other examples at all. We investigate this question by
describing minimal majority clones with few (at most seven) ternary
operations. It turns out that if $\mathcal{C}$ is such a clone, then
$\mathcal{C}^{\left(  3\right)  }$ is isomorphic to $\left[  m_{1}\right]
^{\left(  3\right)  }$ or $\left[  m_{2}\right]  ^{\left(  3\right)  }$. Let
us note that the binary case is studied in \cite{LLPPP}, where binary minimal
clones with at most seven binary operations are determined.

\section{Symmetries of minimal majority functions\label{sect symm maj}}

For any abstract clone $\mathcal{C}$, the symmetric group $S_{n}$ acts
naturally on $\mathcal{C}^{\left(  n\right)  }$: applying a permutation
$\pi\in S_{n}$ to $f\in\mathcal{C}^{\left(  n\right)  }$ we get
\begin{equation}
f\left(  e_{\pi\left(  1\right)  }^{\left(  n\right)  },e_{\pi\left(
2\right)  }^{\left(  n\right)  },\ldots,e_{\pi\left(  n\right)  }^{\left(
n\right)  }\right)  . \label{permutation}%
\end{equation}
In the case of concrete clones this means that we permute the variables of $f
$, and we will adopt this terminology to the abstract case, even though we
cannot speak about variables here. If $f$ is a nontrivial operation, then so
are the operations of the form (\ref{permutation}), hence $S_{n}$ acts on
$\mathcal{C}^{\left(  3\right)  }\setminus\mathcal{I}$, too. Let us denote by
$\sigma\!\left(  f\right)  $ the stabilizer of $f$, i.e. the group of
permutations leaving $f$ invariant.

If $f$ is a majority operation, then $\sigma\!\left(  f\right)  $ is a
subgroup of $S_{3}$, therefore it has $1,2,3$ or $6$ elements. If
$\sigma\!\left(  f\right)  \supseteq A_{3}$, then we say that $f$ is
\emph{cyclically symmetric}, and if $\sigma\!\left(  f\right)  =S_{3}$, then
we say that $f$ is \emph{totally symmetric.}

If $\mathcal{C}$ is a majority clone with just one majority operation, then
the majority rule and the clone axioms completely determine the structure of
$\mathcal{C}^{\left(  3\right)  }$, and it is clear from
Theorem~\ref{thm maj min iff 3min} that in this case $\mathcal{C}$ is minimal.
For example, $\left[  m_{1}\right]  $ is such a clone, so we have the
following theorem.

\begin{theorem}
\label{thm maj=1}If $\mathcal{C}$ is a minimal clone with one majority
operation, then $\mathcal{C}^{\left(  3\right)  }$ is isomorphic to $\left[
m_{1}\right]  ^{\left(  3\right)  }$.
\end{theorem}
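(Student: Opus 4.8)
The plan is to exploit the observation, already made in the excerpt, that when a majority clone has a \emph{single} majority operation $f$, the majority rule together with the three clone axioms for $\mathcal{C}^{(3)}$ leave no freedom at all in the multiplication table of $\mathcal{C}^{(3)}$. Concretely, I would first argue that the underlying set of $\mathcal{C}^{(3)}$ must be exactly $\{e_1^{(3)},e_2^{(3)},e_3^{(3)},f\}$: the projections are always there; $f$ is there by hypothesis; and no other ternary operation can appear, because any nontrivial ternary $g\in\mathcal{C}^{(3)}$ is a near-unanimity (hence majority) operation by the key observation in the proof of Theorem~\ref{thm maj min iff 3min}, and by minimality $[g]=\mathcal{C}$, so $g$ is a majority operation of $\mathcal{C}$; since $\mathcal{C}$ has only one majority operation, $g=f$. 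The same reasoning applies verbatim to $[m_1]^{(3)}$, which therefore also has underlying set $\{e_1,e_2,e_3,m_1\}$ of size four.

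Next I would define the obvious bijection $\varphi\colon \mathcal{C}^{(3)}\to [m_1]^{(3)}$ sending $e_i^{(3)}\mapsto e_i^{(3)}$ and $f\mapsto m_1$, and check that it is an isomorphism of the algebras $\bigl(\mathcal{C}^{(3)};F_3^3,e_1^{(3)},e_2^{(3)},e_3^{(3)}\bigr)$. Preservation of the nullary operations is immediate. For the quaternary operation $F_3^3$ I must check $\varphi\bigl(F_3^3(a,b,c,d)\bigr)=F_3^3(\varphi a,\varphi b,\varphi c,\varphi d)$ for all $a,b,c,d$ in the four-element set. The point is that each such value is forced: if $a$ is one of the projections $e_i^{(3)}$, the first clone axiom gives $F_3^3(e_i^{(3)},b,c,d)=b,c$ or $d$ on both sides, so these cases match trivially under $\varphi$. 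The only remaining case is $a=f$ (resp.\ $a=m_1$), i.e.\ the composites $F_3^3(f,b,c,d)$ where $(b,c,d)$ ranges over the $4^3=64$ triples from $\{e_1,e_2,e_3,f\}$. I would handle these by the same case analysis on whether $b,c,d$ are projections: when two or three of them coincide (as elements of $\mathcal{C}^{(3)}$), the majority identity $f(x,x,y)=f(x,y,x)=f(y,x,x)=x$ together with the third clone axiom pins the value down to an element of $\{e_1,e_2,e_3,f\}$ that is the \emph{same symbolic expression} in both clones; and when $b,c,d$ are three distinct projections, $F_3^3(f,e_1,e_2,e_3)=f$ by the second clone axiom, matching $F_3^3(m_1,e_1,e_2,e_3)=m_1$. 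The genuinely substantive sub-case is when at least one of $b,c,d$ equals $f$ itself (e.g.\ $F_3^3(f,f,e_1,e_2)$); here I would note that the result is again a nontrivial ternary operation of the clone, hence equal to $f$ (resp.\ $m_1$), and then one only has to confirm that the majority rule is consistent with this — i.e.\ that $f(f(x,y,z),x,y)$ genuinely satisfies the majority identities, which is a direct three-line verification using idempotence of $f$.

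I expect the main obstacle to be purely organizational rather than deep: making the bookkeeping of the $64$ (or, after using symmetry in the first two clone axioms, far fewer) composites $F_3^3(f,b,c,d)$ clean enough that the ``same symbolic expression on both sides'' principle is visibly rigorous rather than hand-waved. The cleanest packaging is probably to phrase it as: the word problem for the free majority-clone on one generator, restricted to arity $3$, has a four-element solution set, and \emph{both} $\mathcal{C}^{(3)}$ and $[m_1]^{(3)}$ are quotients of that free structure in which the generator is not a projection and no two of $\{e_1,e_2,e_3,f\}$ are identified — so both equal the free structure itself, and in particular they are isomorphic. Since the hypothesis ``$\mathcal{C}$ has one majority operation'' is exactly what prevents any collapse, this gives $\mathcal{C}^{(3)}\cong[m_1]^{(3)}$, as required.
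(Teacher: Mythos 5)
Your main argument is correct and is essentially the paper's own (very terse) proof: the paper merely asserts that ``the majority rule and the clone axioms completely determine the structure of $\mathcal{C}^{(3)}$'', and the text surrounding the theorem sketches exactly your case analysis --- every nontrivial ternary composite of $f$ is again a majority operation, hence equals $f$, so the four-element composition table is forced and coincides with that of $[m_1]^{(3)}$. One small repair: for the sub-case where $b,c,d$ are the three distinct projections in a \emph{non-identity} order, the second clone axiom alone gives nothing; you need the same observation as in your ``substantive'' sub-case, namely that $f(e_{\pi(1)},e_{\pi(2)},e_{\pi(3)})$ is again a majority operation and hence equals $f$ by uniqueness (this is the total symmetry of $f$ that the paper records explicitly).

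The ``cleanest packaging'' in your last paragraph, however, is false as stated: the ternary part of the free majority clone on one generator does \emph{not} have four elements. If it did, every clone generated by a majority operation would contain exactly one majority operation, whereas $[m_2]$ and $[m_3]$ contain $3$ and $8$ of them (Table~\ref{table 3maj}); concretely, the majority identities alone do not force $f(x,y,z)=f(y,x,z)$, as $m_2$ witnesses. What is true --- and is how the paper itself packages the result --- is that the free clone of the variety $\mathcal{M}_1$ defined by \eqref{e M1} (majority plus total symmetry plus $f(f(x,y,z),y,z)=f(x,y,z)$) has a four-element ternary part, and your direct verification is precisely the proof that any clone with a single majority operation satisfies those additional identities. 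So keep the direct argument and either drop the free-clone reformulation or restate it relative to $\mathcal{M}_1$ rather than to the variety of all majority algebras.
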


If $f$ is the unique majority operation in such a clone, then every nontrivial
ternary superposition of $f$ yields $f$ itself. In particular, $f$ is totally
symmetric, and satisfies $f\left(  f\left(  x,y,z\right)  ,y,z\right)
=f\left(  x,y,z\right)  $. It is easy to check that this identity together
with the total symmetry ensures that $f$ does not generate any nontrivial
ternary operation other than $f$. Thus the clones described in the above
theorem are exactly the factor clones of the clone of the variety
$\mathcal{M}_{1}$ defined by the following identities:%
\begin{equation}
f\left(  x,y,z\right)  =f\left(  y,z,x\right)  =f\left(  y,x,z\right)
=f\left(  f\left(  x,y,z\right)  ,y,z\right)  ,~f\left(  x,x,y\right)  =x.
\label{e M1}%
\end{equation}

This variety has infinitely many subvarieties, therefore there are infinitely
many nonisomorphic minimal clones with just one majority operation. To show
this, we will construct a subdirectly irreducible (in fact, simple) algebra
$\mathbb{A}_{n}\in\mathcal{M}_{1}$ of size $n$ for every $n>6$. Since
$\mathcal{M}_{1}$ is congruence distributive, $\mathbb{A}_{m}\notin$
$\operatorname{HSP}(\mathbb{A}_{n})$ if $m>n$ by J\'{o}nsson's lemma, hence
the subvarieties $\operatorname{HSP}(\mathbb{A}_{n})$ are all different, and
the clones $\operatorname{Clo}\mathbb{A}_{n}$ are pairwise nonisomorphic.

\begin{example}
Let $\mathbb{A}_{n}=\left(  \left\{  1,2,\ldots,n\right\}  ;f\right)  $, where
$f $ is a totally symmetric majority operation defined for $\,1\leq a<b<c\leq
n$ by%
\[
f\left(  a,b,c\right)  =%
\begin{cases}
a & \text{if }\left\lceil \frac{a+c}{2}\right\rceil <b;\\
b & \text{if }b=\left\lfloor \frac{a+c}{2}\right\rfloor \text{ or
}b=\left\lceil \frac{a+c}{2}\right\rceil ;\\
c & \text{if }b<\left\lfloor \frac{a+c}{2}\right\rfloor .
\end{cases}
\]
Note that it suffices to define $f\left(  a,b,c\right)  $ for $a<b<c$ since
$f$ is a totally symmetric majority function. Let us consider the elements of
$\mathbb{A}_{n}$ as points on the real line. We will call the points
$\left\lfloor \frac{a+c}{2}\right\rfloor $ and $\left\lceil \frac{a+c}%
{2}\right\rceil $ the midpoints of the segment between $a$ and $c$. (Segments
of even length have one midpoint, but segments of odd length have two
midpoints!) If $a<b<c$ and $b$ is a midpoint of the segment between $a$ and
$c$, then $f\left(  a,b,c\right)  =b$, otherwise $f\left(  a,b,c\right)  $ is
that endpoint of this segment which is farther from $b$.

It is easy to check that $\mathbb{A}_{n}\in\mathcal{M}_{1}$ (note that $f$ is
conservative), and we claim that $\mathbb{A}_{n}$ is simple if $n>6$. Let us
first observe that since $f$ is a majority operation, any congruence class $I$
has the following property: if at least two of $a,b,c$ belong to $I $, then
$f\left(  a,b,c\right)  \in I$. Let us call such subsets ideals of
$\mathbb{A}_{n}$. If $I$ is an ideal and $a,c\in I$, then $I$ contains the
midpoints of the segment between $a$ and $c$. Successively taking midpoints we
can reach any point between $a$ and $c$, therefore this whole segment belongs
to $I$, i.e. ideals are convex.

Let $\vartheta$ be a nontrivial congruence of $\mathbb{A}_{n}$, and let $a$ be
the least element of $\mathbb{A}_{n}$ that belongs to a non-singleton block
$I$ of $\vartheta$. Since $a$ is the smallest element of $I$, which is a
convex set with at least two elements, we must have $a+1\in I$. If $a\geq4$,
then $f\left(  1,a,a+1\right)  =1$, and by the ideal property $f\left(
1,a,a+1\right)  \in I$. Now $2\in I$ follows by convexity, and then
$n=f\left(  1,2,n\right)  \in I$ (here we need that $n\geq5$). As both $1$ and
$n$ belong to $I$, we have $I=\left\{  1,2,\ldots,n\right\}  $, i.e.
$\vartheta$ is the total relation on $\mathbb{A}_{n}$.

If $a+1\leq n-3$, then a similar argument works: $n=f\left(  a,a+1,n\right)
\in I$, and then $1=f\left(  1,n-1,n\right)  \in I$, therefore $\vartheta$ is
the total relation again. The assumption $n>6$ ensures that at least one of
$a\geq4$ and $a+1\leq n-3$ holds, hence $\mathbb{A}_{n}$ is simple, as claimed.
\end{example}

Let $\mathcal{C}$ be a majority minimal clone. To simplify the notation we
will just write $1,~2$ and $3$ for the first, second and third ternary
projections respectively, and numbers greater than $3$ will denote nontrivial
elements of $\mathcal{C}^{\left(  3\right)  }$. Our next goal is to prove that
if all majority functions in $\mathcal{C}$ are cyclically symmetric, then
there is only one majority operation in the clone, i.e. $\mathcal{C}^{\left(
3\right)  }\cong\left[  m_{1}\right]  ^{\left(  3\right)  }$. In preparation,
we introduce three binary operations on the ternary part of $\mathcal{C}$.%
\begin{align*}
f\ast g  &  =f\left(  g\left(  1,2,3\right)  ,g\left(  2,3,1\right)  ,g\left(
3,1,2\right)  \right) \\
f\bullet g  &  =f\left(  g\left(  1,2,3\right)  ,2,3\right) \\
f\circledcirc g  &  =f\left(  1,g\left(  1,2,3\right)  ,g\left(  1,3,2\right)
\right)
\end{align*}

\begin{theorem}
\label{thm idemp}The operations $\ast,~\bullet$ and $\circledcirc$ are
associative, and if $\mathcal{C}$ is a majority clone, then $\mathcal{C}%
^{\left(  3\right)  }\setminus\mathcal{I}$ is closed under them. Therefore if
$\mathcal{C}^{\left(  3\right)  }$ is finite, then it contains a nontrivial
idempotent element for each of these operations.
\end{theorem}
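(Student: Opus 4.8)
The plan is to treat the three claims --- associativity, closure, and the existence of idempotents --- in turn. Throughout I write $1,2,3$ for the ternary projections and use that $g(1,2,3)=g$ and, more generally, that composing $g$ with a triple of projections permutes its variables as in (\ref{permutation}). Associativity of each of $\ast,\bullet,\circledcirc$ holds in an arbitrary abstract clone and follows by unwinding the composition axiom. For $\bullet$ this is immediate: the inner arguments of $f\bullet g$ are $(g(1,2,3),2,3)$, and substituting $(h(1,2,3),2,3)$ into the last two slots changes nothing, so $(f\bullet g)\bullet h$ and $f\bullet(g\bullet h)$ both equal $f\bigl(g(h(1,2,3),2,3),2,3\bigr)$. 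For $\ast$ and $\circledcirc$ the same scheme works once one checks the appropriate commutation sublemma: for $\ast$, that cyclically shifting the variables commutes with the $\ast$-substitution, i.e. $g(2,3,1)\bigl(h(1,2,3),h(2,3,1),h(3,1,2)\bigr)=(g\ast h)(2,3,1)$ and likewise for $(3,1,2)$; for $\circledcirc$, that transposing the last two variables commutes with the $\circledcirc$-substitution. These are true precisely because the cyclic permutations (respectively $\{\mathrm{id},(2\,3)\}$) form a group, so that the triple of inner arguments transforms coherently; granting them, associativity drops out of the composition axiom exactly as in the $\bullet$ case.

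For closure I would first invoke the observation from the proof of Theorem~\ref{thm maj min iff 3min}: in a clone generated by a majority operation every nontrivial operation is a near-unanimity operation, so every nontrivial \emph{ternary} operation there is a majority operation. Hence it suffices to show that $f\ast g$, $f\bullet g$ and $f\circledcirc g$ are again majority operations whenever $f$ and $g$ are, since a majority operation is certainly not a projection. Each of these is a one-line substitution: feeding $(x,x,y)$, $(x,y,x)$ or $(y,x,x)$ into the defining expression and applying the majority identities first to $g$ and then to $f$ collapses it to $x$; for example $(f\ast g)(x,x,y)=f\bigl(g(x,x,y),g(x,y,x),g(y,x,x)\bigr)=f(x,x,x)=x$, and the remaining cases are identical in shape.

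Finally, for a majority clone the set $\mathcal{C}^{\left(3\right)}\setminus\mathcal{I}$ is nonempty (it contains the generating majority operation) and, by the previous paragraph, closed under $\ast$, under $\bullet$ and under $\circledcirc$; together with associativity this makes it a finite nonempty semigroup under each of these operations. A finite nonempty semigroup contains an idempotent --- iterating any fixed element $a$, the sequence $a,a^{2},a^{3},\dots$ is eventually periodic and a suitable power lying in the period is idempotent --- and applying this three times yields the three nontrivial idempotents.

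The only genuine work is in the associativity sublemmas for $\ast$ and $\circledcirc$: one must track carefully which projection ends up in which argument slot after the double substitution. This is purely mechanical, and I anticipate no conceptual difficulty anywhere in the argument.
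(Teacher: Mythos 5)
Your proposal is correct and follows essentially the same route as the paper: a direct substitution check that the three compositions of majority operations are again majority operations (hence closure of $\mathcal{C}^{\left(3\right)}\setminus\mathcal{I}$, using that every nontrivial ternary operation of a majority clone is a majority operation), a mechanical verification of associativity via the clone composition axiom (your commutation sublemmas for $\ast$ and $\circledcirc$ are exactly the computations the paper carries out for $\circledcirc$), and the standard fact that a finite nonempty semigroup contains an idempotent. No gaps.
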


\begin{proof}
It is easy to check that if $f$ and $g$ are majority operations, then so are
$f\ast\nolinebreak g,f\bullet\nolinebreak g$ and $f\circledcirc g$, hence
$\mathcal{C}^{\left(  3\right)  }\setminus\mathcal{I}$ is closed under these
three operations. Associativity can be checked by a routine calculation using
the three defining axioms of abstract clones. We work out the details for
$\circledcirc$, the other two cases are similar. Let us compute $\left(
f\circledcirc g\right)  \circledcirc h$ first:%
\begin{gather*}
\left(  f\circledcirc g\right)  \circledcirc h=\left(  f\circledcirc g\right)
\left(  1,h\left(  1,2,3\right)  ,h\left(  1,3,2\right)  \right)  =\\
f\left(  1,g\left(  1,h\left(  1,2,3\right)  ,h\left(  1,3,2\right)  \right)
,g\left(  1,h\left(  1,3,2\right)  ,h\left(  1,2,3\right)  \right)  \right)  .
\end{gather*}
For $f\circledcirc\left(  g\circledcirc h\right)  $ we have%
\begin{gather*}
f\circledcirc\left(  g\circledcirc h\right)  =f\left(  1,\left(  g\circledcirc
h\right)  \left(  1,2,3\right)  ,\left(  g\circledcirc h\right)  \left(
1,3,2\right)  \right)  =\\
f\left(  1,g\left(  1,h\left(  1,2,3\right)  ,h\left(  1,3,2\right)  \right)
\left(  1,2,3\right)  ,g\left(  1,h\left(  1,2,3\right)  ,h\left(
1,3,2\right)  \right)  \left(  1,3,2\right)  \right)  =\\
f\left(  1,g\left(  1,h\left(  1,2,3\right)  ,h\left(  1,3,2\right)  \right)
,g\left(  1,h\left(  1,3,2\right)  ,h\left(  1,2,3\right)  \right)  \right)  .
\end{gather*}

The last statement of the theorem follows since every finite semigroup
contains an idempotent element. Let us note that this fact is proved for the
operation $\bullet$ in Lemma~4.4 of \cite{HM} and for $\ast$ in Theorem~2.2 of
\cite{W4maj}.
\end{proof}

Now we are ready to prove the main result of this section. This theorem is an
analogue of a theorem of J.~Dudek and J. Ga\l uszka which states that if a
binary minimal clone contains finitely many nontrivial binary operations all
of which are commutative, then there is just one nontrivial binary operation
in the clone \cite{D7}.

\begin{theorem}
\label{thm symm}Let $\mathcal{C}$ be a majority minimal clone with finitely
many ternary operations. If every nontrivial ternary operation in
$\mathcal{C}$ is cyclically symmetric, then $\mathcal{C}$ contains only one
nontrivial ternary operation, hence $\mathcal{C}^{\left(  3\right)  }%
\cong\left[  m_{1}\right]  ^{\left(  3\right)  }$.
\end{theorem}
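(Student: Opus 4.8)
The plan is to exhibit one nontrivial ternary operation $g$ that generates the whole algebra $\mathcal{C}^{\left(  3\right)  }$ and satisfies the identities defining $\left[  m_{1}\right]  ^{\left(  3\right)  }$. Two facts from the preliminaries are used throughout. First, by the reformulation of Theorem~\ref{thm maj min iff 3min} given after that theorem, the minimality of $\mathcal{C}$ means that $\left\{  1,2,3\right\}  $ is the only proper subalgebra of $\mathcal{C}^{\left(  3\right)  }$; hence \emph{every} nontrivial element of $\mathcal{C}^{\left(  3\right)  }$ already generates $\mathcal{C}^{\left(  3\right)  }$ under $F_{3}^{3}$. Second, by the near-unanimity observation in the proof of Theorem~\ref{thm maj min iff 3min}, every nontrivial ternary operation of $\mathcal{C}$ is a majority operation, so it suffices to show that $\mathcal{C}^{\left(  3\right)  }$ contains a \emph{unique} nontrivial element.

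Since $\mathcal{C}^{\left(  3\right)  }$ is finite, Theorem~\ref{thm idemp} yields a nontrivial $g$ with $g\bullet g=g$, i.e.
\[
g\left(  g\left(  x,y,z\right)  ,y,z\right)  =g\left(  x,y,z\right)  ,
\]
and $g$ is cyclically symmetric by hypothesis. Let $\bar{g}$ denote the operation obtained from $g$ by a transposition of variables; since any two transpositions differ by an element of $A_{3}\subseteq\sigma\!\left(  g\right)  $, we have $\bar{g}\left(  x,y,z\right)  =g\left(  x,z,y\right)  =g\left(  y,x,z\right)  =g\left(  z,y,x\right)  $, and $g$ is totally symmetric if and only if $g=\bar{g}$. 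A short computation with the clone axioms, cyclic symmetry and $g\bullet g=g$ shows that $\bar{g}$ is again a nontrivial cyclically symmetric operation with $\bar{g}\bullet\bar{g}=\bar{g}$.

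The heart of the argument is to prove $g=\bar{g}$. The operation $\ast$ is useless for this, since $f\ast h=h$ whenever $f,h$ are cyclically symmetric, so that every nontrivial element is trivially $\ast$-idempotent; I would argue instead from $\bullet$ and $\circledcirc$. Since $\mathcal{C}$ is minimal, $\bar{g}$ also generates $\mathcal{C}^{\left(  3\right)  }$, so $g$ belongs to the subalgebra it generates; I would combine this with the available identities by examining the finite semigroup generated by $g$ and $\bar{g}$ under $\bullet$ (note that $\bar{g}\bullet g$ and $g\bullet\bar{g}$ are transposes of one another and that the first projection is a unit for $\bullet$) together with a nontrivial $\circledcirc$-idempotent furnished by Theorem~\ref{thm idemp}, rewriting the two idempotency laws through cyclic symmetry until the elements collapse and $g=\bar{g}$ is forced. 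I expect this step — deducing total symmetry from cyclic symmetry, the idempotency laws, and minimality — to be the main obstacle.

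Once $g=\bar{g}$ is established, $g$ is totally symmetric and satisfies $g\left(  g\left(  x,y,z\right)  ,y,z\right)  =g\left(  x,y,z\right)  $ and $g\left(  x,x,y\right)  =x$, i.e. the identities of the variety $\mathcal{M}_{1}$ from \eqref{e M1}. As observed in the discussion following Theorem~\ref{thm maj=1}, $g$ then generates no nontrivial ternary operation other than itself; explicitly, $\left\{  1,2,3,g\right\}  $ is closed under $F_{3}^{3}$, because for $b_{1},b_{2},b_{3}\in\left\{  1,2,3,g\right\}  $ the element $g\left(  b_{1},b_{2},b_{3}\right)  $ again lies in $\left\{  1,2,3,g\right\}  $: if two of the $b_{i}$ agree this follows from the majority law; if $b_{1},b_{2},b_{3}$ is a permutation of $1,2,3$ it follows from total symmetry; and if exactly one $b_{i}$ equals $g$, then, writing $g=g\left(  x_{1},x_{2},x_{3}\right)  $ and using total symmetry of the outer $g$ to move it into the first slot and total symmetry of the inner $g$ to reorder its variables, $g\left(  b_{1},b_{2},b_{3}\right)  $ becomes $g\left(  g\left(  a,u,v\right)  ,u,v\right)  =g\left(  a,u,v\right)  =g$ by $g\bullet g=g$. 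Hence $\mathcal{C}^{\left(  3\right)  }$, being generated by $g$, equals $\left\{  1,2,3,g\right\}  $ and has a single nontrivial element; by Theorem~\ref{thm maj=1} this gives $\mathcal{C}^{\left(  3\right)  }\cong\left[  m_{1}\right]  ^{\left(  3\right)  }$.
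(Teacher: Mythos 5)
Your overall architecture is reasonable, and your final step is correct: once a totally symmetric $\bullet$-idempotent $g$ is in hand, it satisfies the identities \eqref{e M1}, the set $\left\{  1,2,3,g\right\}  $ is closed under $F_{3}^{3}$, and minimality forces $\mathcal{C}^{\left(  3\right)  }=\left\{  1,2,3,g\right\}  \cong\left[  m_{1}\right]  ^{\left(  3\right)  }$. But the proof has a genuine gap exactly where you announce ``the main obstacle'': you never prove $g=\bar{g}$, i.e.\ you never establish that $\mathcal{C}$ contains a totally symmetric nontrivial operation. The plan you sketch --- manipulating the $\bullet$- and $\circledcirc$-idempotency laws together with cyclic symmetry ``until the elements collapse'' --- is not an argument, and it is unlikely to become one by purely equational rewriting applied only to $g$ and $\bar{g}$: nothing in those identities by themselves excludes the a priori possibility that every nontrivial ternary operation has stabilizer exactly $A_{3}$. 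The hypothesis of cyclic symmetry has to be applied to \emph{new} operations manufactured inside the clone, and your sketch never does that.

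The paper closes this gap with a structural, not equational, argument. Suppose no nontrivial operation is totally symmetric, and take a nontrivial $\circledcirc$-idempotent $4$ with $4\circledcirc4=4$; since $\sigma\!\left(  4\right)  =A_{3}$, the element $5:=4\left(  1,3,2\right)  $ differs from $4$. The idempotency gives $4\left(  1,4,5\right)  =4$, cyclic symmetry spreads this over the cyclic rotations, and the computation $4\left(  1,5,4\right)  =4\left(  1,4,5\right)  \left(  1,3,2\right)  =4\left(  1,3,2\right)  =5$ shows that $4$ preserves the three-element set $\left\{  1,4,5\right\}  $ and restricts there to a copy of $m_{3}$. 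But $m_{3}$ generates majority operations that are not cyclically symmetric (Table~\ref{table 3maj}), and these arise as restrictions of nontrivial ternary operations of $\mathcal{C}$, which must then themselves fail to be cyclically symmetric --- contradiction. Only after a totally symmetric operation is known to exist does the $\bullet$-semigroup argument work: for totally symmetric $f,g$ the composite $f\bullet g$ is fixed by $\left(  23\right)  $ and, being nontrivial and hence cyclically symmetric, is totally symmetric; so the totally symmetric operations form a finite semigroup under $\bullet$ and contain the idempotent your last paragraph needs. Without an argument of this kind your proof does not go through.
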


\begin{proof}
Let $\mathcal{C}^{\left(  3\right)  }=\left\{  1,2,\ldots,n\right\}  $, where
$1,2,3$ are the ternary projections as before. First let us assume that there
is no totally symmetric majority function in $\mathcal{C}$, i.e.
$\sigma\!\left(  f\right)  =A_{3}$ for all $f\geq4$. By
Theorem~\ref{thm idemp} there is a nontrivial $\circledcirc$-idempotent, say
$4\circledcirc4=4$. Since $4$ is not invariant under the transposition
$\left(  23\right)  $, the element $4\left(  1,3,2\right)  $ is different from
$4$, thus we may suppose without loss of generality that $4\left(
1,3,2\right)  =5$. We have $4\left(  1,4,5\right)  =4\circledcirc4=4$, hence
$4\left(  1,4,5\right)  =4\left(  4,5,1\right)  =4\left(  5,1,4\right)  =4$
because $4$ is cyclically symmetric. We can compute $4\left(  1,5,4\right)  $
as well, using the associativity of composition:%
\[
4\left(  1,5,4\right)  =4\left(  1\left(  1,3,2\right)  ,4\left(
1,3,2\right)  ,5\left(  1,3,2\right)  \right)  =4\left(  1,4,5\right)  \left(
1,3,2\right)  =4\left(  1,3,2\right)  =5.
\]
Thus we have $4\left(  1,5,4\right)  =4\left(  5,4,1\right)  =4\left(
4,1,5\right)  =5$ , therefore $4$ preserves $\left\{  1,4,5\right\}  $, and
its restriction to this set is isomorphic to $m_{3}$. However, $m_{3}$
generates majority operations that are not cyclically symmetric (see
Table~\ref{table 3maj}), and this contradicts our assumption that every
nontrivial ternary operation of $\mathcal{C}$ is cyclically symmetric.

This contradiction shows that $\mathcal{C}$ must contain at least one totally
symmetric majority function. If $f$ and $g$ are totally symmetric, then
$f\bullet g$ is invariant under the transposition $\left(  23\right)  $:%
\begin{gather*}
\left(  f\bullet g\right)  \left(  1,3,2\right)  =f\left(  g\left(
1,2,3\right)  ,2,3\right)  \left(  1,3,2\right)  =\\
f\left(  g\left(  1,3,2\right)  ,3,2\right)  =f\left(  g\left(  1,2,3\right)
,2,3\right)  =f\bullet g.
\end{gather*}
Since $f\bullet g$ is nontrivial, it is also cyclically symmetric, hence
$\sigma\!\left(  f\bullet g\right)  =S_{3}$. Thus totally symmetric majority
functions form a finite semigroup under $\bullet$, so there is a totally
symmetric $f\in\mathcal{C}^{\left(  3\right)  }\setminus\mathcal{I}$ with
$f\bullet f=f$. Then $f$ satisfies the identities in (\ref{e M1}), hence
$\left[  f\right]  ^{\left(  3\right)  }\cong\left[  m_{1}\right]  ^{\left(
3\right)  }$. By the minimality of $\mathcal{C}$ we have $\left[  f\right]
=\mathcal{C}$, and this proves the theorem.
\end{proof}

\begin{corollary}
\label{orbit}If $\mathcal{C}$ is a majority minimal clone with $2\leq
\left\vert \mathcal{C}^{\left(  3\right)  }\setminus\mathcal{I}\right\vert
<\aleph_{0}$, then the action of $S_{3}$ on $\mathcal{C}^{\left(  3\right)
}\setminus\mathcal{I}$ has an orbit with at least $3$ elements.
\end{corollary}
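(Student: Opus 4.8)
The plan is to argue by contradiction, reducing the statement to Theorem~\ref{thm symm} via an elementary remark about subgroups of $S_3$. First I would recall the orbit--stabilizer correspondence: for $f\in\mathcal{C}^{\left(3\right)}\setminus\mathcal{I}$ the orbit of $f$ under the $S_3$-action has size $\left[S_3:\sigma\!\left(f\right)\right]$, which is $1$, $2$, $3$ or $6$. An orbit has at most $2$ elements precisely when $\sigma\!\left(f\right)$ has order at least $3$; but the only subgroup of $S_3$ of order $3$ is $A_3$, and the only one of order $6$ is $S_3$, so $\left\vert\text{orbit of }f\right\vert\le 2$ is equivalent to $\sigma\!\left(f\right)\supseteq A_3$, i.e. to $f$ being cyclically symmetric. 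In particular, the action has \emph{no} orbit with at least $3$ elements if and only if every nontrivial ternary operation of $\mathcal{C}$ is cyclically symmetric.

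Now suppose, for contradiction, that no orbit of $S_3$ on $\mathcal{C}^{\left(3\right)}\setminus\mathcal{I}$ has at least $3$ elements. By the observation above, every nontrivial ternary operation in $\mathcal{C}$ is then cyclically symmetric. Since $\mathcal{C}$ is a majority minimal clone with finitely many ternary operations, Theorem~\ref{thm symm} applies and gives that $\mathcal{C}$ contains only one nontrivial ternary operation, so $\left\vert\mathcal{C}^{\left(3\right)}\setminus\mathcal{I}\right\vert=1$. This contradicts the hypothesis $\left\vert\mathcal{C}^{\left(3\right)}\setminus\mathcal{I}\right\vert\ge 2$. Hence the $S_3$-action has an orbit with at least $3$ elements, as claimed.

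I do not expect any real obstacle here: the corollary is essentially a restatement of Theorem~\ref{thm symm} in the language of orbits, and the finiteness hypothesis is exactly what is needed to invoke that theorem. The only point requiring a little care is the bookkeeping linking orbit sizes to stabilizer orders and, in turn, to the notion of \emph{cyclically symmetric} as defined in Section~\ref{sect symm maj} (namely $\sigma\!\left(f\right)\supseteq A_3$), which must be seen to encompass both the totally symmetric elements — orbits of size $1$ — and those with $\sigma\!\left(f\right)=A_3$ — orbits of size $2$.
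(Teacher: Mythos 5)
Your proof is correct and is essentially the paper's own argument, merely phrased contrapositively: the paper directly picks a non-cyclically-symmetric $f$ (which exists by Theorem~\ref{thm symm} since $\left\vert\mathcal{C}^{\left(3\right)}\setminus\mathcal{I}\right\vert\geq2$) and notes its orbit has size $6/\left\vert\sigma\!\left(f\right)\right\vert\geq3$. The orbit--stabilizer bookkeeping you spell out is exactly the step the paper leaves implicit.
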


\begin{proof}
By the previous theorem there is a nontrivial operation $f\in\mathcal{C}%
^{\left(  3\right)  }$ which is not cyclically symmetric. Thus $\sigma
\!\left(  f\right)  $ has at most $2$ elements, and therefore the size of the
orbit of $f$ is $6/\left\vert \sigma\!\left(  f\right)  \right\vert \geq3$.
\end{proof}

\section{Minimal clones with at most seven ternary
operations\label{sect atmostfour}}

In this section we are going to prove the following characterization of
majority minimal clones with at most seven ternary operations.

\begin{theorem}
\label{thm few}If $\mathcal{C}$ is a majority minimal clone with at most seven
ternary operations, then $\mathcal{C}^{\left(  3\right)  }$ is isomorphic to
either $\left[  m_{1}\right]  ^{\left(  3\right)  }$ or $\left[  m_{2}\right]
^{\left(  3\right)  }$.
\end{theorem}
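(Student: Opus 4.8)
The plan is to argue by cases on $n:=\left\vert\mathcal{C}^{(3)}\setminus\mathcal{I}\right\vert$. A majority operation is nontrivial and $\left\vert\mathcal{C}^{(3)}\right\vert\le 7$, so $1\le n\le 4$. If $n=1$ we are done by Theorem~\ref{thm maj=1}, and $n=2$ cannot occur by Corollary~\ref{orbit}, since an orbit of size at least $3$ does not fit into a two-element set. So it remains to treat $n=3$ and $n=4$, and for both I would first pin down the action of $S_{3}$ on $\mathcal{C}^{(3)}\setminus\mathcal{I}$. Orbit sizes divide $6$ and sum to $n$, while one of them is at least $3$ by Corollary~\ref{orbit}; hence the nontrivial part consists of a single orbit $\left\{4,5,6\right\}$ of size $3$, together with one fixed point $7$ when $n=4$. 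A size-$3$ orbit has point-stabilizers of order $2$, and these turn out to be pairwise distinct, hence are exactly the three order-$2$ subgroups of $S_{3}$; so after a suitable permutation of variables we may assume $\sigma(4)=\langle(23)\rangle$, $\sigma(5)=\langle(13)\rangle$, $\sigma(6)=\langle(12)\rangle$, meaning that $5$ and $6$ are obtained from $4$ by transposing its first variable with the second, and with the third, respectively. A fixed point $7$ must be totally symmetric.

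The second step is to extract a usable identity for $4$. As in the proof of Theorem~\ref{thm idemp}, the set of nontrivial operations fixed by $(23)$ is closed under $\bullet$, and $f\bullet g$ is always a majority operation. When $n=3$ this set is $\left\{4\right\}$, so $4\bullet 4=4$ immediately. When $n=4$ it is $\left\{4,7\right\}$; here I first exclude $7\bullet 7=7$, for that identity together with the total symmetry of $7$ would force $\left[7\right]^{(3)}\cong\left[m_{1}\right]^{(3)}$ by the remark following Theorem~\ref{thm maj=1}, that is, $\left\vert\mathcal{C}^{(3)}\right\vert=4$ (since $\left[7\right]=\mathcal{C}$ by minimality), contradicting $n=4$. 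Hence $7\bullet 7=4$, and a short inspection of two-element semigroups shows that the only associative $\bullet$ on $\left\{4,7\right\}$ with $7\bullet 7=4$ has $4$ as its zero, so again $4\bullet 4=4$. Thus in both cases $4$ is a majority operation satisfying $4(x,y,z)=4(x,z,y)$ and $4(4(x,y,z),y,z)=4(x,y,z)$, with $\left[4\right]=\mathcal{C}$.

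The decisive step is to compute the multiplication of $\mathcal{C}^{(3)}$. I would determine $F_{3}^{3}(4,g_{1},g_{2},g_{3})$ for all $g_{1},g_{2},g_{3}$, the values on $5$ and $6$ then following by symmetry. Many entries are forced by the majority law (when two of the $g_{i}$ coincide), by the $(23)$-symmetry of $4$, by the identity $4\bullet 4=4$ and its consequences (together with the other idempotents supplied by Theorem~\ref{thm idemp}), and by the known action of $S_{3}$ on triples of projections; moreover, by the observation in the proof of Theorem~\ref{thm maj min iff 3min}, every such value is either a projection or a majority operation, and its own stabilizer can frequently be read off from the way it is built, which confines it to $\left\{4,5,6\right\}$. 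The remaining ambiguous entries are then resolved using associativity of $F_{3}^{3}$ and the absence of a proper nontrivial subalgebra of $\mathcal{C}^{(3)}$: a ``wrong'' choice of value would make one of $\left\{1,2,3,4\right\}$, $\left\{1,2,3,4,5\right\}$ or a symmetric variant closed under composition. The outcome is that $\left\{1,2,3,4,5,6\right\}$ is a subalgebra, hence equals $\mathcal{C}^{(3)}$ (so the case $n=4$ is impossible), and the table produced coincides with that of $\left[m_{2}\right]^{(3)}$.

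The main obstacle is precisely this last step: the careful, if elementary, bookkeeping that determines the full composition table. It should be stressed that the symmetry data and the single identity $4\bullet 4=4$ do not by themselves force the conclusion -- there are $(23)$-symmetric, $\bullet$-idempotent majority operations (necessarily generating non-minimal clones) that have more than three nontrivial ternary superpositions -- so minimality, in the guise of the absence of proper nontrivial subalgebras of $\mathcal{C}^{(3)}$, has to be used in an essential way to eliminate the spurious configurations. That is where the weight of the proof lies.
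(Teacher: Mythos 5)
Your reductions are sound as far as they go: the case split on $n=\left\vert\mathcal{C}^{(3)}\setminus\mathcal{I}\right\vert$, the disposal of $n\le 2$ via Theorem~\ref{thm maj=1} and Corollary~\ref{orbit}, the identification of the orbit structure (one orbit $\{4,5,6\}$ with the three distinct order-two stabilizers, plus a totally symmetric fixed point $7$ when $n=4$), and the derivation of $4\bullet 4=4$ (including the exclusion of $7\bullet 7=7$ via the identities \eqref{e M1}) are all correct, and they match or slightly refine the paper's setup. The problem is that everything after that --- which is where, as you yourself say, the weight of the proof lies --- is deferred to an unexecuted ``bookkeeping'' step. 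That step is the theorem. For $n=3$ the paper does not determine the composition table directly at all: it shows that restriction to $\{4,5,6\}$ is a clone homomorphism that is \emph{injective} on $\mathcal{C}^{(3)}$ (the injectivity itself requires the specific computation $5(4,5,6)=4(4,5,6)(2,3,1)\neq 4(4,5,6)$), and then invokes Cs\'ak\'any's classification of minimal clones on a three-element set to conclude $\mathcal{C}^{(3)}\cong[m_2]^{(3)}$. For $n=4$ the paper needs the classification of nonconservative minimal majority operations on the four-element set (Theorem~\ref{thm 3.14}) to pin down $7|_{\{4,5,6,7\}}\cong M_1$, and then a carefully ordered chain of roughly eight specific compositions, each resolving an ambiguity left by the previous ones, culminating in a fifth nontrivial ternary operation $6(2,3,6)$. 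None of this appears in your sketch, and nothing you have written guarantees that the ``forced'' and ``ambiguous'' entries resolve as you assert.

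Two specific points make the gap more than a matter of missing routine detail. First, your claimed mechanism for killing $n=4$ --- that a wrong choice would make $\{1,2,3,4\}$ or $\{1,\dots,6\}$ closed under composition --- is not what happens in the correct argument: the contradiction there is the appearance of an \emph{eighth} element of $\mathcal{C}^{(3)}$, not the closure of a proper subset, and you give no evidence that your alternative mechanism actually fires. Second, for $n=3$ it is not clear that the data you have isolated ($(23)$-symmetry of $4$, the orbit action, $4\bullet 4=4$, and minimality) determines the table by purely local deductions without something playing the role of the three-element classification; the paper's detour through $\mathcal{O}_{\{4,5,6\}}$ exists precisely to import that classification. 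As it stands the proposal is a plausible plan, not a proof; to complete it you would either have to carry out the table computation in full (and verify that it closes), or adopt the paper's strategy of restricting to the invariant set of nontrivial operations and appealing to the known classifications on three- and four-element sets.
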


Since there are three ternary projections, the clones under consideration
contain $1,2,3$ or 4 majority operations. Theorem~\ref{thm maj=1} describes
the minimal clones with one majority operation, and from Corollary \ref{orbit}
we see immediately that there is no minimal clone with exactly two majority
operations. We will deal with the cases of three and four majority operations
in two separate lemmas.

\begin{lemma}
\label{thm maj=3}If $\mathcal{C}$ is a minimal clone with three majority
operations, then $\mathcal{C}^{\left(  3\right)  }$ is isomorphic to $\left[
m_{2}\right]  ^{\left(  3\right)  }$.
\end{lemma}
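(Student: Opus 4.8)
The plan is to compute the entire composition table of $\mathcal{C}^{(3)}$ and to recognise it as that of $\left[m_{2}\right]^{(3)}$.

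\textit{The skeleton.} Every nontrivial element of $\mathcal{C}^{(3)}$ is a near-unanimity operation, hence (being ternary) a majority operation, so by hypothesis $\mathcal{C}^{(3)}\setminus\mathcal{I}=\left\{ 4,5,6\right\} $ has exactly three elements. By Corollary~\ref{orbit} the natural $S_{3}$-action on this three-element set has an orbit of size at least $3$, hence is transitive; therefore each stabilizer $\sigma\!\left(i\right)$ $\left(i=4,5,6\right)$ has exactly two elements. Since the point stabilizers within a single orbit exhaust a conjugacy class of subgroups, and the three two-element subgroups of $S_{3}$ form one such class, of size $3$, the operations $4,5,6$ have pairwise distinct stabilizers, namely the three transposition subgroups. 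Relabelling $4,5,6$ if necessary, we may assume that $4$ is the unique nontrivial ternary operation invariant under the transposition $\left(23\right)$, i.e.\ $4\!\left(1,3,2\right)=4$; then automatically $5:=4\!\left(2,1,3\right)$ and $6:=4\!\left(3,2,1\right)$ are the other two, $\left(23\right)$ fixes $4$ and interchanges $5$ and $6$, while $\sigma\!\left(5\right),\sigma\!\left(6\right)$ are generated by $\left(13\right)$ and $\left(12\right)$ respectively.

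\textit{The target and the reduction.} The clone $\left[m_{2}\right]$ is the dual discriminator clone, and its ternary part has exactly the same skeleton: three projections together with a single $S_{3}$-orbit of three majority operations whose stabilizers are the three transposition subgroups (Table~\ref{table 3maj} and the remark following Theorem~\ref{thm Cs3 maj}); in particular $\left\vert \left[m_{2}\right]^{(3)}\right\vert =6=\left\vert \mathcal{C}^{(3)}\right\vert $. Regarded as an algebra with the quaternary operation $F_{3}^{3}$ and nullary operations $e_{1},e_{2},e_{3}$, the algebra $\left[m_{2}\right]^{(3)}$ is generated by the single majority operation $d$ that is invariant under $\left(23\right)$. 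Hence, once we verify that $4$ satisfies every identity that $d$ satisfies, there is a homomorphism $\left[m_{2}\right]^{(3)}\rightarrow\mathcal{C}^{(3)}$ sending $d$ to $4$; it is onto because $4$ generates $\mathcal{C}^{(3)}$, and since both algebras have six elements it must be an isomorphism. Thus it suffices to evaluate every composite $4\!\left(g_{1},g_{2},g_{3}\right)$ with $g_{1},g_{2},g_{3}\in\mathcal{C}^{(3)}$ and to check that the resulting table coincides — under $1,2,3\mapsto 1,2,3$ and the stabilizer-matching $4,5,6\mapsto d,\,d\!\left(2,1,3\right),\,d\!\left(3,2,1\right)$ — with the composition table of $\left[m_{2}\right]^{(3)}$.

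\textit{Computing the table.} Two kinds of arguments are used. The first is the stabilizer computation already exploited in the proof of Theorem~\ref{thm symm}: if $h=4\!\left(g_{1},g_{2},g_{3}\right)$ is nontrivial — hence one of $4,5,6$ — then $h^{\tau}=4\!\left(g_{1}^{\tau},g_{2}^{\tau},g_{3}^{\tau}\right)$ for $\tau\in S_{3}$, and using $4^{\left(23\right)}=4$ together with the $S_{3}$-action on $1,2,3$ and on $4,5,6$ one can frequently exhibit a transposition fixing $h$, which determines $h$ uniquely. This disposes at once of products such as $4\bullet 4=4$, $4\circledcirc 4=4$ and $4\ast 4=4\!\left(4,5,6\right)=4$, and more generally of most composites in which at most one argument slot is filled by a nontrivial operation. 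The second argument is minimality: $\left\{ 1,2,3\right\} $ is the only proper subalgebra of $\mathcal{C}^{(3)}$, so whenever the symmetry argument still leaves two candidates for a composite, one shows that the wrong value would make some proper subset of the form $\left\{ 1,2,3\right\} \cup\left\{ 4\right\} $ or $\left\{ 1,2,3\right\} \cup\left\{ 4,5\right\} $ closed under $F_{3}^{3}$, contrary to minimality; this forces the remaining entries (for instance $4\!\left(3,2,4\right)=6$). The associative operations $\ast,\bullet,\circledcirc$ and their idempotents (Theorem~\ref{thm idemp}) supply the extra relations needed to get this started. After finitely many such steps the whole table is determined, and it agrees with that of $\left[m_{2}\right]^{(3)}$.

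\textit{The main obstacle.} The difficulty here is organisational rather than conceptual: one must keep careful track — through the conjugation of stabilizers — of which of $4,5,6$ each of the numerous composites equals, and isolate precisely which composites genuinely need minimality rather than following formally from the majority law and the single symmetry $4\!\left(1,3,2\right)=4$. I expect the cleanest presentation to prove first a short list of normal-form identities for $4$, sufficient to reduce every ternary term to one of $1,\dots,6$, and only afterwards to invoke minimality — once — on the resulting finite algebra in order to rule out the spurious closed subsets.
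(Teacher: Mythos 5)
Your skeleton is correct: with exactly three nontrivial ternary operations, Corollary~\ref{orbit} forces a single $S_{3}$-orbit, and the three stabilizers are indeed the three distinct transposition subgroups (they are conjugate, and the normalizer of a transposition subgroup of $S_{3}$ is the subgroup itself). The reduction ``compute the composition table and match it against $\left[m_{2}\right]^{\left(3\right)}$'' is also sound in principle. But the proof stops exactly where the work begins: the table is never computed. You verify a handful of composites that follow directly from the majority rule and the symmetry $4\left(1,3,2\right)=4$ (such as $4\bullet4$, $4\circledcirc4$, $4\ast4$), and then assert that after finitely many such steps the whole table is determined. That assertion is the entire content of the lemma: entries such as $4\left(1,4,5\right)$ or $4\left(3,2,4\right)$ are not fixed by any stabilizer argument --- applying $\left(23\right)$, $\left(12\right)$ or $\left(13\right)$ to them merely produces other undetermined composites --- and you supply no mechanism that actually pins them down.

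Worse, the one mechanism you do propose for the ambiguous entries, namely minimality ruling out proper closed subsets such as $\left\{1,2,3,4\right\}$ or $\left\{1,2,3,4,5\right\}$, is vacuous here. Every subalgebra of $\mathcal{C}^{\left(3\right)}$ contains the three projections (they are the nullary operations) and is closed under permutation of variables, so any subalgebra containing one nontrivial element already contains the whole orbit $\left\{4,5,6\right\}$ and hence equals $\mathcal{C}^{\left(3\right)}$; minimality is therefore automatic once the orbit structure is known and cannot force a single further table entry. In particular your sample claim that $4\left(3,2,4\right)=6$ is forced by minimality cannot be substantiated this way. The paper closes the argument quite differently and much more quickly: since every composition of majority operations is again a majority operation, each element of $\mathcal{C}$ acts on $\mathcal{C}^{\left(3\right)}$ preserving $\left\{4,5,6\right\}$, giving a clone homomorphism $\varphi\colon\mathcal{C}\rightarrow\mathcal{O}_{\left\{4,5,6\right\}}$ which is injective on $\mathcal{C}^{\left(3\right)}$ (for instance $5\left(4,5,6\right)=4\left(4,5,6\right)\left(2,3,1\right)\neq4\left(4,5,6\right)$ because no element of the orbit is fixed by the $3$-cycle); hence $\mathcal{C}^{\left(3\right)}$ is isomorphic to the ternary part of a minimal majority clone on a three-element set, and Theorem~\ref{thm Cs3 maj} together with $\left\vert\mathcal{C}^{\left(3\right)}\right\vert=6$ forces $\left[m_{2}\right]^{\left(3\right)}$. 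If you wish to avoid citing that classification, you must carry out explicit associativity computations, in the style of the proof of Lemma~\ref{thm maj=4}, for every composite not settled by symmetry; as it stands your argument has a hole exactly there.
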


\begin{proof}
Let $\mathcal{C}$ be a minimal clone with three majority functions, and let
$\mathcal{C}^{\left(  3\right)  }=\left\{  1,2,3,4,5,6\right\}  $, where
$1,2,3$ are the ternary projections. Considering the orbits of the action of
$S_{3}$ on $\left\{  4,5,6\right\}  $ we see by Corollary \ref{orbit} that the
only possibility is that there is just one orbit, i.e. any two nontrivial
ternary operations can be obtained form each other by cyclic permutations of
variables. We can suppose that $4\left(  2,3,1\right)  =5$ and $5\left(
2,3,1\right)  =6$ (and then $6\left(  2,3,1\right)  =4$).

Any composition of majority operations is again a majority operation,
therefore the set $\mathcal{C}^{\left(  3\right)  }\setminus\mathcal{I}%
=\left\{  4,5,6\right\}  $ is preserved by $4$. This implies that every
operation in $\mathcal{C}$ preserves $\left\{  4,5,6\right\}  $, since
$\mathcal{C}=\left[  4\right]  .$ Thus we have a clone homomorphism%
\[
\varphi:\mathcal{C}\rightarrow\mathcal{O}_{\left\{  4,5,6\right\}
},\,f\mapsto f|_{\left\{  4,5,6\right\}  }.
\]

We claim that $\varphi$ is injective on $\left\{  1,2,3,4,5,6\right\}  $.
Clearly it suffices to show that $\varphi\left(  4\right)  \neq\varphi\left(
5\right)  \neq\varphi\left(  6\right)  \neq\varphi\left(  4\right)  $. We
prove the first unequality, the other two are similar. Let us compute
$5\left(  4,5,6\right)  $ using the associativity of composition:%
\begin{gather*}
5\left(  4,5,6\right)  =4\left(  2,3,1\right)  \left(  4,5,6\right)  =4\left(
5,6,4\right)  =\\
4\left(  4\left(  2,3,1\right)  ,5\left(  2,3,1\right)  ,6\left(
2,3,1\right)  \right)  =4\left(  4,5,6\right)  \left(  2,3,1\right)  .
\end{gather*}
Since $4\left(  4,5,6\right)  \in\left\{  4,5,6\right\}  $ and none of these
three elements are invariant under the permutation $\left(  231\right)  $, we
have $5\left(  4,5,6\right)  =4\left(  4,5,6\right)  \left(  2,3,1\right)
\neq4\left(  4,5,6\right)  $. Thus $4|_{\left\{  4,5,6\right\}  }%
\neq5|_{\left\{  4,5,6\right\}  }$ as claimed.

Now we see that $\mathcal{C}^{\left(  3\right)  }$ is isomorphic to its image
under $\varphi$, which is the ternary part of a minimal clone on a
three-element set. Therefore $\mathcal{C}^{\left(  3\right)  }\cong\left[
m_{i}\right]  ^{\left(  3\right)  }$ for some $i\in\left\{  1,2,3\right\}  $.
The cardinality of $\mathcal{C}^{\left(  3\right)  }$ is $6$, so we must have
$i=2$, and the lemma is proved.
\end{proof}

\begin{remark}
The previous lemma can be formulated in terms of algebras and varieties as
follows. Let $\mathcal{M}_{2}$ be the variety defined by the three-variable
identities satisfied by $\left(  \left\{  1,2,3\right\}  ;m_{2}\right)  $. If
$f$ is a majority operation on a set $A$, then $\left[  f\right]  $ is a
minimal clone with exactly three majority operations iff $\left(  A;f\right)
$ is term-equivalent to an element of $\mathcal{M}_{2}\setminus\mathcal{M}%
_{1}$. Note that no two different subvarieties of $\mathcal{M}_{2}$ are
term-equivalent, since for any $\mathbb{A=}\left(  A;f\right)  \in
\mathcal{M}_{2}$ the basic operation $f$ is the only nontrivial ternary
function in $\operatorname{Clo}\mathbb{A}$ which is invariant under the
transposition $\left(  23\right)  $. This means that in order to show that
there are infinitely many nonisomorphic minimal clones with three majority
operations, it suffices to verify that the variety $\mathcal{M}_{2}$ has
infinitely many subvarieties that are not contained in $\mathcal{M}_{1}$. If
$d_{A}$ is the dual discriminator function on a set $A$ with at least three
elements, then $\left(  A;d_{A}\left(  z,y,x\right)  \right)  \in
\mathcal{M}_{2}\setminus\mathcal{M}_{1}$, and by J\'{o}nsson's lemma we have
$\left(  B;d_{B}\left(  z,y,x\right)  \right)  \notin\operatorname{HSP}\left(
A;d_{A}\left(  z,y,x\right)  \right)  $ if $A$ is finite and $\left\vert
A\right\vert <\left\vert B\right\vert $. Thus the algebras $\left(
A;d_{A}\left(  z,y,x\right)  \right)  $ with $A=\left\{  1,2,\ldots,n\right\}
$ and $n\geq3$ generate pairwise different subvarieties of $\mathcal{M}_{2}$
that are not contained in $\mathcal{M}_{1}$.
\end{remark}

\begin{lemma}
\label{thm maj=4}There is no minimal clone with four majority operations.
\end{lemma}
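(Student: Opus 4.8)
The plan is to argue as in Lemma~\ref{thm maj=3}: set $\mathcal{C}^{\left( 3\right) }=\left\{ 1,2,3,4,5,6,7\right\} $ with $1,2,3$ the projections, so $\mathcal{C}^{\left( 3\right) }\setminus\mathcal{I}=\left\{ 4,5,6,7\right\} $ is a four-element set preserved by every operation of $\mathcal{C}$ (since $\mathcal{C}$ is generated by one majority function, which permutes the nontrivial ternary operations). First I would pin down the action of $S_{3}$ on $\left\{ 4,5,6,7\right\} $. By Corollary~\ref{orbit} there is an orbit of size at least $3$; since $4=3+1$ and orbit sizes divide $6$, the orbits are either $3+1$ or a single orbit of size $\ge4$, i.e. of size $6$ --- impossible in a four-element set --- so the orbit partition must be $3+1$. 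Thus exactly one of the four nontrivial operations, say $7$, is totally symmetric, and $\left\{ 4,5,6\right\} $ is a single orbit of three cyclically-equivalent but not totally-symmetric operations, with $\sigma(4)=\sigma(5)=\sigma(6)=A_{3}$; as in Lemma~\ref{thm maj=3} I may take $4(2,3,1)=5$, $5(2,3,1)=6$, $6(2,3,1)=4$.

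The next step is to get a contradiction from the coexistence of $7$ with the cyclic triple. Because $7$ is totally symmetric and idempotent-like operations are available, I would first show (as in the proof of Theorem~\ref{thm symm}) that the totally symmetric majority functions are closed under $\bullet$, so after replacing $7$ by a $\bullet$-idempotent among them I may assume $7\bullet7=7$, i.e. $7(7(x,y,z),y,z)=7(x,y,z)$ together with total symmetry --- the identities~\eqref{e M1}. Then $\left[ 7\right] ^{\left( 3\right) }\cong\left[ m_{1}\right] ^{\left( 3\right) }$, which has only $4$ elements $\left\{ 1,2,3,7\right\} $; by minimality $\left[ 7\right] =\mathcal{C}$, forcing $\mathcal{C}^{\left( 3\right) }=\left\{ 1,2,3,7\right\} $, contradicting $\left\vert \mathcal{C}^{\left( 3\right) }\right\vert =7$. (If the $\bullet$-semigroup of totally symmetric functions happened to be empty --- i.e.\ $7\bullet 7$ were trivial --- I would instead note $7\bullet 7=7(7(x,y,z),y,z)$ is a majority operation whenever $7$ is, so it is nontrivial and totally symmetric, keeping the semigroup nonempty; this is the same routine check used in Theorem~\ref{thm symm}.)

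An alternative, perhaps cleaner, endgame avoids invoking Theorem~\ref{thm symm} twice and argues directly via the restriction homomorphism $\varphi:\mathcal{C}\to\mathcal{O}_{\left\{ 4,5,6,7\right\} }$, $f\mapsto f|_{\left\{ 4,5,6,7\right\} }$. I would show $\varphi$ is injective on $\left\{ 1,\dots,7\right\} $ exactly as in Lemma~\ref{thm maj=3}: compute $5(4,5,6,?)$-type superpositions using $5=4(2,3,1)$ and the associativity axiom to get $5|_{\cdot}\ne4|_{\cdot}$ on the orbit, and handle $7$ separately by evaluating $7$ on a triple from $\left\{ 4,5,6\right\} $ on which the projections $1,2,3$ and the operations $4,5,6$ all take distinct values, so $7$ is distinguished from each of them. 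Then $\varphi(\mathcal{C}^{\left( 3\right) })$ is the ternary part of a minimal clone on a four-element set; but a four-element minimal majority clone has ternary part isomorphic (Theorems~\ref{thm Cs3 maj} and \ref{thm 3.14}, noting $\varphi$ maps $\mathcal{C}$ onto a clone generated by one majority function on $\left\{ 4,5,6,7\right\} $) to $\left[ m_{i}\right] ^{\left( 3\right) }$ of size $4$, $6$ or $11$ --- never $7$. This contradiction completes the proof.

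The main obstacle I anticipate is the injectivity of $\varphi$ on the orbit-of-size-$3$ part together with $7$: showing $4,5,6$ restrict to distinct operations on the four-element set is the same trick as in Lemma~\ref{thm maj=3}, but one must check the totally symmetric $7$ does not collapse onto any of $1,2,3,4,5,6$ under restriction, which requires exhibiting a concrete triple of elements of $\left\{ 4,5,6,7\right\} $ witnessing the separation --- and here one needs to know enough about how $4$ acts on $\left\{ 4,5,6,7\right\} $, i.e.\ what $7(4,5,6)$, $4(4,5,6)$ etc.\ are, which in turn may require a short case analysis. The first endgame (reducing $7$ to an $m_{1}$-type idempotent) sidesteps this and is the route I would actually pursue, using the $3+1$ orbit structure only to locate the totally symmetric $7$ and the rest verbatim from Theorem~\ref{thm symm}.
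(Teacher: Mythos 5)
Both of the routes you propose break down, and for instructive reasons. For the first endgame: the closure of the totally symmetric operations under $\bullet$ in the proof of Theorem~\ref{thm symm} is \emph{not} a routine check that transfers here --- it uses the standing hypothesis of that theorem that \emph{every} nontrivial ternary operation is cyclically symmetric, which is exactly what fails in the four-operation case. (Incidentally, your stabilizer computation is off: an orbit of size $3$ forces $\left\vert \sigma(4)\right\vert =2$, so $\sigma(4),\sigma(5),\sigma(6)$ are generated by transpositions, not equal to $A_{3}$; if they were $A_{3}$ the orbit would have size $2$.) Concretely, $7\bullet7=7\left(  7,2,3\right)  $ is only guaranteed to be invariant under $\left(  23\right)  $, so it lies in $\left\{  4,7\right\}  $; and it cannot equal $7$, since $7\bullet7=7$ together with total symmetry gives the identities \eqref{e M1} and hence $\left[  7\right]  ^{\left(  3\right)  }\cong\left[  m_{1}\right]  ^{\left(  3\right)  }$, contradicting $\left\vert \mathcal{C}^{\left(  3\right)  }\right\vert =7$ by minimality. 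So in any putative counterexample $7\bullet7=4$, the set $\left\{  7\right\}  $ is not a $\bullet$-subsemigroup, and the idempotent argument has nothing to bite on.

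The second endgame fails because $\varphi$ is genuinely not injective, so no amount of case analysis will rescue it. One can show that $7\left(  4,5,6\right)  =7$ (it is totally symmetric, and $7$ is the only totally symmetric element), so $7|_{\left\{  4,5,6,7\right\}  }$ is a nonconservative totally symmetric minimal majority operation on a four-element set, hence isomorphic to $M_{1}$ by Theorem~\ref{thm 3.14}; and since $M_{1}$ generates no majority operation other than itself, \emph{all four} of $4,5,6,7$ restrict to the same operation on $\left\{  4,5,6,7\right\}  $. This also explains why the Lemma~\ref{thm maj=3} separation trick dies: there one uses that $4\left(  4,5,6\right)  $ is not fixed by $\left(  231\right)  $, but here $4\left(  4,5,6\right)  =7$, which is totally symmetric. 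The actual proof has to work outside the invariant set: one computes a chain of compositions on mixed triples such as $\left(  1,2,7\right)  $ and $\left(  4,5,3\right)  $ --- establishing $7\left(  1,2,7\right)  =6$, $6\left(  2,7,1\right)  =6$, $7\left(  4,5,3\right)  =6$, $6\left(  4,5,3\right)  =7$, etc.\ --- and finally exhibits $f=6\left(  2,3,6\right)  $ with $f\left(  4,5,3\right)  =5$, a value none of $4,5,6,7$ takes there, producing a fifth nontrivial ternary operation and the desired contradiction. Your proposal contains neither this computation nor any workable substitute for it.
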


\begin{proof}
Let us suppose that $\mathcal{C}$ is a minimal clone with four majority
functions, and let $\mathcal{C}^{\left(  3\right)  }=\left\{
1,2,3,4,5,6,7\right\}  $, with $1,2,3$ being the ternary projections.
Corollary \ref{orbit} shows that there are two orbits under the action of
$S_{3}$ on $\left\{  4,5,6,7\right\}  $: a three-element and a one-element
orbit. Thus one of the four nontrivial operations is totally symmetric, the
other three operations have two-element invariance groups, and the latter
three functions can be obtained from each other by cyclic permutations of
their variables. We may assume without loss of generality that $7$ is totally
symmetric, and $4$, $5$ and $6$ are invariant under the transpositions
$\left(  23\right)  $, $\left(  13\right)  $ and $\left(  12\right)  $
respectively. Then we must have $4\left(  2,3,1\right)  =5$, $5\left(
2,3,1\right)  =6$ and $6\left(  2,3,1\right)  =4$.

Since any composition of majority operations is nontrivial, every operation in
$\mathcal{C}$ preserves $\left\{  4,5,6,7\right\}  $. Restricting to this set,
we obtain (the ternary part of) a minimal clone on a four-element set. The
operation $7\left(  4,5,6\right)  $ is easily seen to be totally symmetric:
applying a permutation to $7\left(  4,5,6\right)  $ will just permute $4,5$
and $6$ in the arguments of $7$, and this has no effect on the final value, as
$7$ is totally symmetric. Since the only totally symmetric operation in
$\mathcal{C}^{\left(  3\right)  }$ is $7$, we must have $7\left(
4,5,6\right)  =7$. This means that the restriction of $7$ to $\left\{
4,5,6,7\right\}  $ is a totally symmetric minimal majority operation that is
not conservative. Now Theorem~\ref{thm 3.14} implies that $7|_{\left\{
4,5,6,7\right\}  }$ is isomorphic to $M_{1}$, so $7\left(  a,b,c\right)  =7$
for any pairwise distinct $a,b,c\in\left\{  4,5,6,7\right\}  $. Moreover,
since $M_{1}$ does not generate any majority operation but itself, the
operations $4,5,6,7$ coincide with each other on $\left\{  4,5,6,7\right\}  $:%
\begin{equation}
f\left(  a,b,c\right)  =7\text{ if }f,a,b,c\in\left\{  4,5,6,7\right\}  \text{
and }a,b,c\text{ are pairwise distinct.} \label{4567}%
\end{equation}
In particular, we have $6\left(  6,4,5\right)  =7$, and taking into account
that $4$ and $5$ are obtained from $6$ by cyclic permutations of variables,
this can be written as $6\ast6=7$.

In what follows, we will compute many more compositions until we get a
contradiction by constructing a nontrivial ternary operation in $\mathcal{C}$
which is different from $4,5,6$ and $7$.

The operation $7\left(  1,2,7\right)  $ is invariant under the transposition
$\left(  12\right)  $, hence it is either $6$ or $7$. The latter is
impossible, since $7\left(  1,2,7\right)  =7$ implies that $7$ satisfies the
identities in (\ref{e M1}), and then the clone generated by $7$ would contain
just one nontrivial ternary operation. Thus we have $7\left(  1,2,7\right)
=6$, and by the total symmetry of $7$ it follows that%
\begin{equation}
7\left(  1,2,7\right)  =7\left(  7,1,2\right)  =7\left(  2,7,1\right)  =6.
\label{7(127)=6}%
\end{equation}

Let us now consider the values of $6$ on $\left(  1,2,7\right)  ,\left(
2,7,1\right)  ,\left(  7,1,2\right)  $. We have $6\left(  1,2,7\right)
\in\left\{  6,7\right\}  $ since $6\left(  1,2,7\right)  $ is invariant under
$\left(  12\right)  $. Applying this transposition to $6\left(  2,7,1\right)
$ we obtain $6\left(  7,1,2\right)  $:%
\[
6\left(  2,7,1\right)  \left(  2,1,3\right)  =6\left(  1,7,2\right)  =6\left(
7,1,2\right)  .
\]
Therefore either both $6\left(  2,7,1\right)  $ and $6\left(  7,1,2\right)  $
are equal to $6$ or $7$, or one of them is $4,$ the other one is $5$. The
resulting eight possibilities are summarized in the following table.%
\begin{equation}%
\begin{tabular}
[c]{ccccccccc}\hline
\multicolumn{1}{|c}{$\!6\left(  1,2,7\right)  $} & \multicolumn{1}{|c}{$\!6$}
& \multicolumn{1}{|c}{$\!6$} & \multicolumn{1}{|c}{$\!6$} &
\multicolumn{1}{|c}{$\!6$} & \multicolumn{1}{|c}{$\!7$} &
\multicolumn{1}{|c}{$\!7$} & \multicolumn{1}{|c}{$\!7$} &
\multicolumn{1}{|c|}{$\!7$}\\
\multicolumn{1}{|c}{$\!6\left(  2,7,1\right)  $} & \multicolumn{1}{|c}{$\!7$}
& \multicolumn{1}{|c}{$\!6$} & \multicolumn{1}{|c}{$\!4$} &
\multicolumn{1}{|c}{$\!5$} & \multicolumn{1}{|c}{$\!7$} &
\multicolumn{1}{|c}{$\!6$} & \multicolumn{1}{|c}{$\!4$} &
\multicolumn{1}{|c|}{$\!5$}\\
\multicolumn{1}{|c}{$\!6\left(  7,1,2\right)  $} & \multicolumn{1}{|c}{$\!7$}
& \multicolumn{1}{|c}{$\!6$} & \multicolumn{1}{|c}{$\!5$} &
\multicolumn{1}{|c}{$\!4$} & \multicolumn{1}{|c}{$\!7$} &
\multicolumn{1}{|c}{$\!6$} & \multicolumn{1}{|c}{$\!5$} &
\multicolumn{1}{|c|}{$\!4$}\\\hline
& $\!$ & $\!\uparrow$ & $\!$ & $\!$ & $\!$ & $\!\uparrow$ & $\!$ & $\!$%
\end{tabular}
\ \ \ \ \ \ \ \label{6<127>}%
\end{equation}

Let us consider any of the eight columns, and let $a,b,c$ be the elements in
this column. Then using the fact that $7=6\ast6$, we obtain
\[
7\left(  1,2,7\right)  =6\left(  6\left(  1,2,7\right)  ,6\left(
2,7,1\right)  ,6\left(  7,1,2\right)  \right)  =6\left(  a,b,c\right)  .
\]
For the two columns marked by the arrows this gives $7\left(  1,2,7\right)
=6$ by the majority rule. Similarly, for the first and the fifth column the
majority rule yields $7\left(  1,2,7\right)  =7$, and in the remaining four
cases we get $7\left(  1,2,7\right)  =7$ again, according to (\ref{4567}).
However, we already know from (\ref{7(127)=6}) that $7\left(  1,2,7\right)
=6$, so one of the two possibilities indicated by the arrows takes place. In
both cases we have
\begin{equation}
6\left(  2,7,1\right)  =6. \label{6(271)=6}%
\end{equation}

Now we go on to collect some information about the function $7$. For the
reader's convenience, we put the number of the equation being used over the
equality sign in the following calculations.

First of all, using (\ref{7(127)=6}) and (\ref{6(271)=6}) we obtain
\[
7\left(  6,2,7\right)  \overset{\left(  \ref{7(127)=6}\right)  }{=}7\left(
7,1,2\right)  \left(  2,7,1\right)  \overset{\left(  \ref{7(127)=6}\right)
}{=}6\left(  2,7,1\right)  \overset{\left(  \ref{6(271)=6}\right)  }%
{=}6\text{.}%
\]
Permuting variables we get
\begin{subequations}
\label{7(3457)}%
\begin{align}
7\left(  4,3,7\right)   &  =7\left(  6,2,7\right)  \left(  2,3,1\right)
=6\left(  2,3,1\right)  =4;\label{7(374)=4}\\
7\left(  5,3,7\right)   &  =7\left(  6,2,7\right)  \left(  1,3,2\right)
=6\left(  1,3,2\right)  =5. \label{7(375)=5}%
\end{align}

We already know from (\ref{4567}) that $7\left(  4,5,7\right)  =7$, and let us
suppose for a moment that $7\left(  4,5,3\right)  =7$. Then (\ref{7(3457)})
shows that $7$ preserves $\left\{  3,4,5,7\right\}  $, and its restriction to
this four-element set is a totally symmetric nonconservative minimal majority
function. Therefore it is isomorphic to $M_{1}$ by Theorem \ref{thm 3.14}.
However, this is clearly not the case. This contradiction shows that $7\left(
4,5,3\right)  \neq7$. Let us observe that $7\left(  4,5,3\right)  \left(
2,1,3\right)  =7\left(  5,4,3\right)  =7\left(  4,5,3\right)  $, i.e.
$7\left(  4,5,3\right)  $ is invariant under the transposition $\left(
12\right)  $. Since $6$ and $7$ are the only nontrivial functions in our clone
which are invariant under $\left(  12\right)  $, we must have
\end{subequations}
\begin{equation}
7\left(  4,5,3\right)  =6. \label{7(345)=6}%
\end{equation}

Next we calculate the value of $6\left(  4,5,3\right)  $:%
\begin{equation}
6\left(  4,5,3\right)  \overset{\left(  \ref{7(127)=6}\right)  }{=}7\left(
1,2,7\right)  \left(  4,5,3\right)  \overset{\left(  \ref{7(345)=6}\right)
}{=}7\left(  4,5,6\right)  \overset{\left(  \ref{4567}\right)  }{=}7.
\label{6(453)=7}%
\end{equation}
Note that $6\left(  3,4,5\right)  \left(  2,1,3\right)  =6\left(
3,5,4\right)  =6\left(  5,3,4\right)  $, hence similarly to the previous
table, we can list the possible behaviours of $6$ on $\{\left(  4,5,3\right)
,\allowbreak\left(  5,3,4\right)  ,\left(  3,4,5\right)  \}$.%
\begin{equation}%
\begin{tabular}
[c]{ccccc}\hline
\multicolumn{1}{|c}{$\!6\left(  4,5,3\right)  $} & \multicolumn{1}{|c}{$\!7$}
& \multicolumn{1}{|c}{$\!7$} & \multicolumn{1}{|c}{$\!7$} &
\multicolumn{1}{|c|}{$\!7$}\\
\multicolumn{1}{|c}{$\!6\left(  5,3,4\right)  $} & \multicolumn{1}{|c}{$\!7$}
& \multicolumn{1}{|c}{$\!6$} & \multicolumn{1}{|c}{$\!5$} &
\multicolumn{1}{|c|}{$\!4$}\\
\multicolumn{1}{|c}{$\!6\left(  3,4,5\right)  $} & \multicolumn{1}{|c}{$\!7$}
& \multicolumn{1}{|c}{$\!6$} & \multicolumn{1}{|c}{$\!4$} &
\multicolumn{1}{|c|}{$\!5$}\\\hline
& $\!$ & $\!\uparrow$ & $\!$ & $\!$%
\end{tabular}
\ \ \ \ \ \ \ \ \ \ \ \ \label{6<345>}%
\end{equation}
We can read $7\left(  4,5,3\right)  $ from this table in the same way as we
read $7\left(  1,2,7\right)  $ from (\ref{6<127>}). We see that $7\left(
4,5,3\right)  =7$ in three of the four cases. However, we already know that
$7\left(  4,5,3\right)  \overset{\left(  \ref{7(345)=6}\right)  }{=}6$, so the
only possibility is the one marked by the arrow.

Finally, to reach the desired contradiction, let us consider $6\left(
2,3,6\right)  $. Denoting this composition by $f$, we show that $f\left(
4,5,3\right)  =5$:%
\[
f\left(  4,5,3\right)  =6\left(  2,3,6\right)  \left(  4,5,3\right)
\overset{\left(  \ref{6(453)=7}\right)  }{=}6\left(  5,3,7\right)
\overset{\left(  \ref{7(127)=6}\right)  }{=}7\left(  1,2,7\right)  \left(
5,3,7\right)  \overset{\left(  \mathrm{\ref{7(375)=5}}\right)  }{=}7\left(
5,3,5\right)  =5\text{.}%
\]

The operation $f$ is nontrivial, but it does not coincide with any of
$4,~5,~6$ or $7$, because the value of these functions on $\left(
4,5,3\right)  $ is different from $5$. Indeed, we have%
\begin{align*}
4  &  \left(  4,5,3\right)  \overset{%
%TCIMACRO{\TeXButton{hphantom}{\hphantom{\left( 4.6 \right)}}}%
%BeginExpansion
\hphantom{\left( 4.6 \right)}%
%EndExpansion
}{=}6\left(  5,3,4\right)  \overset{\left(  \ref{6<345>}\right)  }{=}6;\\
5  &  \left(  4,5,3\right)  \overset{%
%TCIMACRO{\TeXButton{hphantom}{\hphantom{\left( 4.6 \right)}}}%
%BeginExpansion
\hphantom{\left( 4.6 \right)}%
%EndExpansion
}{=}6\left(  3,4,5\right)  \overset{\left(  \ref{6<345>}\right)  }{=}6;\\
6  &  \left(  4,5,3\right)  \overset{\left(  \ref{6(453)=7}\right)  }{=}7;\\
7  &  \left(  4,5,3\right)  \overset{\left(  \ref{7(345)=6}\right)  }{=}6.
\end{align*}
Thus we have more than four majority operations in our clone, and this
contradiction completes the proof.
\end{proof}

\end{document}